\documentclass{article}

\usepackage{amsmath,amsthm,amssymb,mathtools}
\usepackage[english]{babel}
\usepackage{geometry}
\allowdisplaybreaks[2]
\usepackage[hidelinks]{hyperref}

\numberwithin{equation}{section}

\newtheorem{theorem}{Theorem}[section]
\newtheorem{proposition}[theorem]{Proposition}
\newtheorem{lemma}[theorem]{Lemma}
\newtheorem{corollary}[theorem]{Corollary}

\newcommand{\C}{\mathbb C}
\newcommand{\R}{\mathbb R}
\DeclareMathOperator{\tr}{tr}

\DeclareMathOperator{\HS}{HS}

\title{Pogorelov interior estimates and a Liouville theorem for the
	complex Monge--Amp\`ere equation}

\author{Hongyu Chen \and Jingchen Hu \and Li Sheng}

\date{}

\begin{document}
	\maketitle
	
	\begin{abstract}
		We prove a Pogorelov interior estimate for strictly plurisubharmonic
		solutions of the complex Monge--Amp\`ere equation $\det(u_{i\overline j})=1$ with homogeneous
		Dirichlet data under the condition that, for some constant $\kappa\geq 1$,
		$(\kappa u_{i\overline j}-u_{is}u^{s\overline t} u_{\overline{tj}})$ is non-negative definite. For $\kappa=1$, this gives the estimate
		for real convex solutions.
		The estimate depends only on the dimension, $\kappa$, and the $C^0$
		and $C^1$ norms of the solution. 
		
		 As an application, every smooth entire
		convex solution of $\det(u_{i\bar j})=1$ on $\C^n$ is a real quadratic
		polynomial. The same conclusion holds if real convexity is assumed
		only outside a compact subset of $\C^n$.
	\end{abstract}

	\section{Introduction}\label{sec:introduction}
	
	The classical J\"orgens--Calabi--Pogorelov theorem asserts that a smooth
	convex entire solution of the real Monge--Amp\`ere equation
	\[
	\det D^2v=1\qquad\text{on }\R^m
	\]
	is a quadratic polynomial \cite{r19,r4,r22}. A natural complex analogue
	is the equation
	\begin{equation}\label{e1_1}
		\det(u_{i\bar j})=1\qquad\text{on }\C^n,
	\end{equation} for plurisubharmonic $u$.
	There is, however, an essential difference between the real and complex
	settings. The complex Hessian is unchanged when a pluriharmonic function
	is added to the solution. Thus plurisubharmonicity alone cannot imply
	quadraticity of $u$, and additional conditions are	necessary. Our Liouville theorem assumes convexity with respect to the
	real variables, namely $D_{\R}^2u\ge0$, where $D_{\R}^2u$ denotes the real
	Hessian. For the interior estimate, we allow a weaker matrix bound
	controlling the pure holomorphic Hessian in terms of the complex
	Hessian.
	
	Liouville theorems for \eqref{e1_1} have been studied under both geometric and analytic conditions. Equation \eqref{e1_1} implies that the associated K\"ahler metric $g=(u_{i\bar j})$ is
	Ricci-flat. Calabi asked whether completeness of this metric forces
	it to be flat; see \cite{r5} for this formulation. However, completeness alone is insufficient:
	LeBrun constructed complete nonflat Ricci-flat K\"ahler metrics on
	$\C^n$, $n\ge2$ \cite{LeBrun1991}. Later Tian proved that a complete Calabi--Yau metric on $\C^2$ with maximal 	volume growth is flat, and proposed the corresponding rigidity in
	higher dimensions \cite{Tian2006}. However
	the higher-dimensional analogue fails, as shown independently by
	the constructions of Yang Li, Conlon--Rochon, and Sz\'ekelyhidi
	\cite{YangLi2019,ConlonRochon2021,Szekelyhidi2019}.

	An analytic formulation instead imposes growth conditions on the
	solution. G\'abor Sz\'ekelyhidi asked whether an entire plurisubharmonic
	solution of \eqref{e1_1} satisfying
	\[
	C^{-1}(1+|z|^2)\le u(z)\le C(1+|z|^2)
	\]
	for some $C>0$ must be a quadratic polynomial. This question was
	recorded at the AIM workshop on nonlinear PDEs in real and complex
	geometry; we follow the attribution in \cite{r20} and the problem
	list \cite{AIM2018}. Yu Wang \cite{r29} proved quadraticity under the
	stronger asymptotic condition $u=Q+o(|z|^2)$, where $Q$ is a positive
	Euclidean quadratic model normalized to solve \eqref{e1_1}. His proof
	combines rescaling with Savin's interior
	regularity theorem for small perturbations of solutions of fully
	nonlinear elliptic equations \cite{Savin2007}. Li and Sheng
	\cite{r20} proved that the complex Hessian is constant under completeness
	and a lower quadratic growth bound, and obtained quadraticity under
	completeness and a two-sided quadratic bound. More recently,
	Bing-Long Chen \cite{r5} proved quadraticity under completeness and
	the weaker lower bound
	\[
	u(z)\ge C^{-1}(1+|x|^p),\qquad
	p>\frac{2n}{n+1},\qquad z=x+\sqrt{-1}y,
	\]
	which controls only $n$ totally real directions. 

	Interior estimates also provide rigidity under analytic hypotheses
	on the complex Hessian. Riebesehl and Schulz \cite{r24} established
	interior estimates for third derivatives of mixed type and proved
	that the complex Hessian of an entire solution of \eqref{e1_1} is
	constant when it is uniformly equivalent to the Euclidean metric.
	For equations with a positive, sufficiently regular density,
	B\l ocki and Dinew \cite{r3} obtained local regularity and a priori
	estimates from a $W^{2,p}$ bound with $p>n(n-1)$, while He
	\cite{r13} obtained an interior complex Laplacian bound from an
	$L^p$ bound for the Laplacian with $p>n^2$.
	Cheng and Xu \cite{r9} proved interior $W^{2,p}$ estimates for
	solutions close in $L^\infty$ to a smooth strictly plurisubharmonic
	background when the Monge--Amp\`ere density is sufficiently close
	to $1$. These local results require their respective quantitative
	hypotheses; applying them to an entire solution requires control of
	those hypotheses under rescaling.

	For the real Monge--Amp\`ere equation, Pogorelov's interior Hessian
	estimate with homogeneous Dirichlet data is a basic ingredient in
	the quadratic rigidity theory \cite{r22,r23}. B\l ocki extended
	the interior regularity theory to degenerate equations \cite{r2},
	and Yuan gave a monotonicity proof of Pogorelov's estimate in the
	nondegenerate case \cite{r31}. Interior estimates and Liouville
	theorems have also been developed for other Hessian equations.
	For the real equation $\sigma_2(D^2v)=1$ on the positive branch,
	Warren and Yuan \cite{r30} proved interior Hessian estimates in
	dimension three without a convexity assumption and deduced
	quadraticity of entire solutions with quadratic growth,
	in the sense that $|v(x)|\le C(1+|x|^2)$.
	Shankar and Yuan \cite{r28} obtained the corresponding interior
	estimates and quadratic-growth rigidity in dimension four; in
	higher dimensions their results impose an additional dynamic
	semiconvexity condition.

	In arbitrary dimensions, McGonagle, Song, and Yuan \cite{r21}
	proved interior Hessian estimates for convex solutions of the
	quadratic Hessian equation, and more generally for an almost-convex
	class. The corresponding rigidity for entire almost-convex
	solutions, without a growth assumption, had been proved by Chang
	and Yuan \cite{ChangYuan2010}. Shankar and Yuan subsequently
	established interior Hessian estimates for semiconvex solutions
	\cite{r27} and proved that every smooth semiconvex entire solution
	is quadratic, again without a growth assumption
	\cite{ShankarYuan2022}. Here semiconvexity means that $D^2v\ge-KI$
	for a fixed constant $K\ge0$.
	In a recent preprint, Li and Wu \cite{r33} obtained interior Hessian
	estimates in every dimension without convexity assumptions and
	the resulting Liouville theorem under quadratic growth.
	For positive $C^\alpha$ right-hand sides, the preprints of Zhou and
	Zhu \cite{r34} and Chen, Zhou, and Zhu \cite{r35} establish interior
	$C^{2,\alpha}$ regularity for convex admissible viscosity solutions
	and for solutions on the full positive branch, respectively.
	These variable-density regularity results are distinct from the
	constant-density rigidity statements. Related interior curvature
	estimates for the graphical scalar curvature equation are studied
	in the recent preprint of Qiu and Yan \cite{r36}.

	For higher Hessian equations, Chou and Wang \cite{ChouWang2001}
	developed Pogorelov-type interior estimates in their variational
	theory of the $k$-Hessian equation. For $2\le k<n$, Li, Ren, and Wang
	\cite{LiRenWang2016} proved an estimate of the form
	$(-v)\Delta v\le C$ for $(k+1)$-convex solutions with zero boundary
	values. Applying this estimate to expanding sections, they proved
	that an entire $(k+1)$-convex solution of $\sigma_k(D^2v)=1$ with
	a lower quadratic growth bound is a quadratic polynomial.
	Here $(k+1)$-convexity means that the eigenvalues of $D^2v$ belong
	to the G\aa rding cone $\Gamma_{k+1}$. These results illustrate the
	role of both the structural assumptions in an interior estimate
	and the control of sections needed to obtain global rigidity.

	Our approach to the complex equation is based on constructing
	auxiliary quantities that combine the mixed complex Hessian
	$A=(u_{i\bar j})$ and the pure holomorphic Hessian $B=(u_{ij})$.
	This idea has been used by the authors of the current paper on several topics related to complex Monge-Amp\`ere equations. For degenerate complex Monge--Amp\`ere
	equations, analogous auxiliary functions can be used to obtain metric lower
	bounds, preservation of convexity, and maximum-rank results
	\cite{r15,r14,r16}. The same approach was subsequently
	extended to nondegenerate equations: \cite{r17}
	proved strict real convexity of the Cheng--Yau solution on bounded
	smooth strictly convex domains, and  \cite{r6}
	proved strict convexity of $-\sqrt{-u}$ for the zero-boundary
	solution of $\det(u_{i\bar j})=1$ on such domains in $\C^2$.
	For the related complex $\sigma_2$-Hessian eigenvalue problem,
	Chen, Li, and Ma \cite{ChenLiMa2026} proved that $-\log(-v)$ is
	strictly convex as a real function for the negative first eigenfunction
	$v$ on smooth bounded real uniformly strictly convex domains in
	$\C^n$, $n\ge2$. They also established a Brunn--Minkowski inequality
	for the corresponding first eigenvalue.

	In the present paper, we use the Schur complement
	\[
	M=A-B\overline{A^{-1}}\overline B
	\]
	and the scalar auxiliary quantity
	\[
	S_c=c\,\tr A+\tr\!\left(B\overline{A^{-1}}\overline B\right).
	\]
	The Schur-complement criterion identifies real convexity with
	$M\ge0$; its strict form appears in \cite[Lemma A.6]{r14}.
	We allow the more general condition
	$B\overline{A^{-1}}\overline B\le\kappa A$, with a fixed
	$\kappa\ge1$. The matrix differential identities for $A$ and $B$
	lead to a cancellation of fourth-order derivatives in $LM$,
	where $L=u^{\bar j i}\partial_i\partial_{\bar j}$ is the
	linearized operator. A suitable choice of $c$ then gives a
	positive gradient term in the differential inequality for
	$\log S_c$. This is the key to applying the maximum principle to
	\[
	\Psi=(-u)^\beta S_c\exp\!\left(\alpha|\partial u|^2\right),
	\]
	with $\alpha>0$ chosen in terms of the gradient bound.
	The resulting estimate controls the full real Hessian in terms
	of $n$, $\kappa$, and the $C^0$ and $C^1$ norms of the solution.
	Its constants are independent of the geometry and volume of the
	domain, and no a priori uniform ellipticity or higher-integrability
	bound for the complex Hessian is assumed. We first state this
	general estimate, then its convex specialization, and finally
	the Liouville theorem.
	
	For the statements below,
	$|\partial u|^2:=\sum_{i=1}^n|\partial u/\partial z_i|^2$.
	For a real matrix $H=(H_{ab})$, set $|H|:=\max_{a,b}|H_{ab}|$.
	For a real matrix field $H$ on a set $E$, we use
	\[
	\|H\|_{L^\infty(E)}:=\max_{a,b}\|H_{ab}\|_{L^\infty(E)}.
	\]
	In particular, $\|D_{\R}^2u\|_{L^\infty(E)}$ denotes the
	$L^\infty$ seminorm of the second derivatives.
	
	\begin{theorem}\label{thm:general-interior}
		Let $n\ge2$, let $\kappa\ge1$, and let $\Omega\subset\C^n$ be a domain.
		Suppose that $u\in C^4(\Omega)\cap C^2(\overline\Omega,\R)$ satisfies
		\[\det(u_{i\overline{j}})=1, \text{  in }\Omega,\qquad u=0\quad\text{on }\partial\Omega\]
		\[
		\begin{gathered}
			B\overline{A^{-1}}\overline B\le\kappa A
			\quad\text{in }\Omega. \\
		\text{Here\ }\qquad 	A=(u_{i\bar j}),\qquad B=(u_{ij}).
		\end{gathered}
		\]
		Assume either that $\Omega$ is bounded, or that $\Omega$ and $u$ are
		invariant under translations along a fixed real linear subspace and
		the quotient of $\overline\Omega$ by this subspace is compact.
		Set
		$\beta=(4n-2)\kappa+2$.
		Then
		\begin{equation}\label{e1_general}
				\|(-u)^\beta D_{\R}^2u\|_{L^\infty(\Omega)}
				\le32n\beta(1+\sqrt\kappa)e^{1/8}
				\bigl(1+\|\partial u\|_{L^\infty(\Omega)}^2\bigr)
				\|u\|_{L^\infty(\Omega)}^{\beta-1}.
		\end{equation}
	\end{theorem}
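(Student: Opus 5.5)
We apply the maximum principle to the auxiliary function sketched in the introduction,
\[
\Psi=(-u)^\beta S_c\exp\!\left(\alpha|\partial u|^2\right),\qquad S_c=c\,\tr A+\tr\!\left(B\overline{A^{-1}}\overline B\right),
\]
with $c$ and $\alpha$ fixed below. Since $u<0$ in $\Omega$ (by the maximum principle for plurisubharmonic $u$ with zero boundary data) and $u=0$ on $\partial\Omega$, $\Psi$ vanishes on $\partial\Omega$. It therefore attains an interior maximum. In the bounded case this is immediate. In the translation-invariant case, $\Psi$ is invariant, so it descends to the compact quotient and still attains its maximum at some $z_0\in\Omega$.

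The first step is a pointwise comparison between $S_c$ and the real Hessian. Write $D^2_{\R}u$ in terms of $A$ and $B$: it is the real form of the Hermitian block matrix $\begin{pmatrix}A&B\\ \overline B&\overline A\end{pmatrix}$. Using $B\overline{A^{-1}}\overline B\le\kappa A$, we get $|B|\le\sqrt\kappa\,|A|$ in an $A$-weighted operator sense. This yields
\[
|D^2_{\R}u|\le 2(1+\sqrt\kappa)\tr A\le 2(1+\sqrt\kappa)c^{-1}S_c .
\]
Hence it suffices to bound $(-u)^\beta S_c$. The constant $1+\sqrt\kappa$ in the statement fits this comparison.

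The second and main step is the differential inequality for $\log S_c$. Choose coordinates at $z_0$ with $A=I$. Differentiate the equation $\log\det A=0$ twice to express $Lu_{i\bar j}$ and $Lu_{ij}$ in terms of third-derivative quadratic terms. The fourth-order terms cancel in $L\,\tr A$ and in $L\,\tr(B\overline{A^{-1}}\overline B)$, as with the Schur complement $M=A-B\overline{A^{-1}}\overline B$. We then aim for
\[
L\log S_c\ \ge\ \theta\,\frac{|\partial S_c|^2_{A}}{S_c^2}
\]
for some $\theta>0$ depending on $\kappa$, with $c$ chosen comparable to $\kappa$. The assumption $M\ge(1-\kappa)A$, i.e.\ $B\overline{A^{-1}}\overline B\le\kappa A$, is what controls the indefinite cross terms in $u_{ij\bar k}$ and $u_{ijk}$, via Cauchy--Schwarz weighted by $\kappa$. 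This is the step I expect to be hardest. One must track the third derivatives $u_{i\bar j k}$ and $u_{ijk}$ and show that the bad terms are absorbed by the good terms $|u_{i\bar jk}|^2$ and $|u_{ijk}|^2$. The exponent $\beta=(4n-2)\kappa+2$ indicates that the absorption constant is about $1/\beta$. I would first attempt it with $c=\kappa$ and a Kato-type inequality $|\partial\tr(\cdot)|^2\le \tr(\cdot)\sum|\partial(\cdot)|^2$.

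The third step is the maximum principle computation at $z_0$. Set $\phi=\beta\log(-u)+\log S_c+\alpha|\partial u|^2$. At $z_0$ we have $\partial\phi=0$ and $L\phi\le0$. We use $Lu=n$ and
\[
L|\partial u|^2=\sum u^{i\bar j}u_{ki}u_{\bar k\bar j}+\sum u_{k\bar k}\ \ge\ \tr A .
\]
Eliminate $\partial S_c/S_c=-\beta\,\partial u/u-\alpha\,\partial|\partial u|^2$ in the gradient term. Also use $\beta|\partial u|^2_A/u^2$ in $L\log(-u)=n/u-|\partial u|^2_A/u^2$. The positive gradient term from step two, with $\theta\beta^2\ge\beta$, swallows the $-\beta|\partial u|_A^2/u^2$ term. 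This leaves
\[
\alpha\,\tr A\le \frac{\beta n}{-u}+C\alpha^2\,|\partial u|^2\,\tr A\cdot(\text{const}).
\]
Choose $\alpha=1/\bigl(8(1+\|\partial u\|^2_{L^\infty})\bigr)$ so that the $\alpha^2$ term is absorbed. This gives $(-u)\tr A\le 16 n\beta(1+\|\partial u\|^2)$ at $z_0$. Since $e^{\alpha|\partial u|^2}\le e^{1/8}$, we conclude $\Psi\le\Psi(z_0)\lesssim n\beta(1+\|\partial u\|^2)\|u\|^{\beta-1}e^{1/8}$. Combining with the first step gives \eqref{e1_general}.
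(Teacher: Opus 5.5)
Your outer scaffolding matches the paper: the test function $\Psi$, the comparison $|D^2_{\R}u|\le 2(1+\sqrt\kappa)\tr A\le 2(1+\sqrt\kappa)c^{-1}S_c$, and the maximum-principle bookkeeping at $z_0$. There is, however, a genuine gap. The one non-routine step, your Step 2, is only stated as a goal, and the route you suggest for it does not produce a positive $\theta$.

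Write $a=\tr A$, $d=\tr(B\overline{A^{-1}}\overline B)$ and $E=La$. A Kato-type Cauchy--Schwarz bound gives at best $|\partial a|_A^2\le a\,La$, i.e.\ $L\log a\ge0$, which is no strict gain. Worse, $\partial S_c$ contains the combination $c\,\partial a+\bigl(\tr(B\overline A^{-1}\partial_i\overline B)\bigr)_i$. Bounding it by the triangle inequality puts a coefficient $(c+\sqrt\kappa)^2$ in front of $aE$. A gain requires this coefficient to be $<c(c+1)$, but $(c+\sqrt\kappa)^2-c(c+1)=c(2\sqrt\kappa-1)+\kappa>0$ for every $c$ once $\kappa\ge1$. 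Even with the sharp bound of the paper's argument, the gain is $\frac{c-(n-1)\kappa}{c+1}\,a$. This is $\le0$ for your proposed $c=\kappa$ whenever $n\ge2$.

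What is missing is the following mechanism.

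\textbf{The Schur-complement identity.} Set $\mathbb B^{(p)}=\partial_pB-B\overline A^{-1}\partial_p\overline A$. The identity $LM=-u^{\bar qp}\mathbb B^{(p)}\overline A^{-1}(\mathbb B^{(q)})^*$ yields $La=E$ and $Ld=E+F$ with $F\ge0$. Split $d_i=\tr(\mathbb B^{(i)}\overline A^{-1}\overline B)+\tr(B\overline A^{-1}\partial_i\overline B)$; the first piece is bounded by $dF$.

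\textbf{Trace-free structure.} At a point where $A$ is diagonal, the matrices $C^{(p)}_{iq}=u_{pi\bar q}/\sqrt{\lambda_p\lambda_i\lambda_q}$ are trace-free because $\partial_p\log\det A=0$. For trace-free $C$, $|e|=1$ and $|k|\le\sqrt\kappa$ one has $|cCe+C^*k|^2\le(c^2+(n-1)\kappa)\|C\|_{\mathrm{HS}}^2$. This removes the term linear in $c$ and gives $|c\,\partial a+(\tr(B\overline A^{-1}\partial_i\overline B))_i|_A^2\le(c^2+(n-1)\kappa)aE$.

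\textbf{The gain and the choice of $c$.} Combining the two bounds, $|\partial S_c|_A^2\le\bigl(\frac{c^2+(n-1)\kappa}{c+1}a+d\bigr)\bigl((c+1)E+F\bigr)$. The extra $E$ in $Ld$ is exactly the source of the gain once $c>(n-1)\kappa$. Using $0\le d\le\kappa a$ one gets $\theta=\frac{c-(n-1)\kappa}{c^2+c\kappa+n\kappa}$. The choice $c=(2n-1)\kappa$ gives $\theta=1/(\beta-1)$ with $\beta=2c+2=(4n-2)\kappa+2$, which explains the exponent.

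Two smaller points. First, you need $\theta\beta>1$, not $\theta\beta\ge1$. At $\theta=1/\beta$ an uncontrolled cross term $\langle X,Y\rangle_A$ survives, where $X=\partial u/(-u)$ and $Y=\alpha\,\partial G$ with $G=|\partial u|^2$. The paper instead uses the identity $\frac1{\beta-1}|\beta X-Y|_A^2-\beta|X|_A^2=\frac{\beta}{\beta-1}|X-Y|_A^2-|Y|_A^2$.

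Second, in Step 3 keep $LG=a+d$ rather than discarding $d$, because $|\partial G|_A^2\le4\|\partial u\|_{L^\infty}^2(a+d)$. If you discard $d$ you must invoke $d\le\kappa a$. Then $\alpha=1/(8(1+\|\partial u\|_{L^\infty}^2))$ no longer absorbs the error term when $\kappa>1$ and $\|\partial u\|_{L^\infty}$ is large, so the stated constant does not follow.
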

	
	By the Schur-complement criterion, the case $\kappa=1$ is precisely
	real convexity. We therefore obtain the following special case.
	
	\begin{theorem}\label{thm:interior}
		Let $n\ge2$, let $\Omega\subset\C^n$ be a convex domain, and let
		$u\in C^4(\Omega)\cap C^2(\overline\Omega,\R)$ satisfy
		\[
		\det(u_{i\bar j})=1,\qquad D_{\R}^2u\ge0
		\quad\text{in }\Omega,\qquad
		u=0\quad\text{on }\partial\Omega.
		\]
		Assume that either $\Omega$ is bounded, or there is a fixed real linear
		subspace of $\R^{2n}$ contained in $\ker D_{\R}^2u(x)$ for every
		$x\in\Omega$, such that $u$ and $\Omega$ are invariant under translations
		along this subspace and the quotient of $\overline\Omega$ by this
		subspace is compact. Then
		\begin{equation}\label{e1_2}
			\|(-u)^{4n}D_{\R}^2u\|_{L^\infty(\Omega)}
			\le256n^2e^{1/8}
			\bigl(1+\|\partial u\|_{L^\infty(\Omega)}^2\bigr)
			\|u\|_{L^\infty(\Omega)}^{4n-1}.
		\end{equation}
	\end{theorem}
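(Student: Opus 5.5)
Theorem~\ref{thm:interior} is the specialization $\kappa=1$ of Theorem~\ref{thm:general-interior}, so the plan is to check that hypothesis and then simplify the constants.

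\textbf{Step 1: real convexity gives the matrix condition with $\kappa=1$.} Write $z=x+\sqrt{-1}y$. In complex coordinates, the real Hessian $D_{\R}^2u$ is unitarily equivalent, up to a positive factor, to the Hermitian block matrix
\[
\begin{pmatrix} A & B\\ \overline B & \overline A\end{pmatrix}.
\]
This follows by acting on vectors $(\xi,\bar\xi)$ with $\xi\in\C^n$: one gets
\[
D_{\R}^2u(v,v)=2\operatorname{Re}\bigl(\xi^TB\xi\bigr)+2\,\bar\xi^TA\xi
\]
up to a fixed normalization. Since $\det A=1$ and $u$ is plurisubharmonic, $A>0$, hence $\overline A>0$. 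The Schur-complement criterion then says the block matrix is $\ge0$ if and only if $A-B\overline{A^{-1}}\overline B\ge0$. So $D_{\R}^2u\ge0$ is exactly $B\overline{A^{-1}}\overline B\le A$, which is the hypothesis of Theorem~\ref{thm:general-interior} with $\kappa=1$.

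\textbf{Step 2: domain hypotheses.} In the bounded case there is nothing to check. In the translation-invariant case, the stated assumptions (invariance of $u$ and $\Omega$ along a real subspace, with compact quotient of $\overline\Omega$) are among those of Theorem~\ref{thm:general-interior}. The extra kernel condition and the convexity of $\Omega$ are not needed for the estimate itself; they only make the hypotheses natural for the Liouville application.

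\textbf{Step 3: constants.} With $\kappa=1$ we get $\beta=(4n-2)+2=4n$ and $1+\sqrt\kappa=2$. The right-hand side of \eqref{e1_general} becomes
\[
32n\cdot4n\cdot2\,e^{1/8}\bigl(1+\|\partial u\|^2_{L^\infty(\Omega)}\bigr)\|u\|_{L^\infty(\Omega)}^{4n-1}
=256n^2e^{1/8}\bigl(1+\|\partial u\|^2_{L^\infty(\Omega)}\bigr)\|u\|_{L^\infty(\Omega)}^{4n-1},
\]
which is \eqref{e1_2}.

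\textbf{Main obstacle.} The only delicate point is Step 1: the precise identification of $D_{\R}^2u$ with the complex block matrix and the correct conjugations in the Schur complement. This is what the paper's Schur-complement criterion (cf.\ \cite[Lemma A.6]{r14}, in its non-strict form) supplies. For the non-strict inequality, one uses positivity of $\overline A$ together with the standard congruence
\[
\begin{pmatrix} I & -B\overline{A}^{-1}\\ 0 & I\end{pmatrix}.
\]
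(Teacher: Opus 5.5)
Your proposal is correct and is essentially the paper's proof: real convexity gives $A\ge0$, hence $A>0$ since $\det A=1$, and Lemma~\ref{lem:realcomplex} then gives $B\overline{A}^{-1}\overline B\le A$. Applying Theorem~\ref{thm:general-interior} with $\kappa=1$ gives $\beta=4n$ and the constant $32n\cdot4n\cdot2\,e^{1/8}=256n^2e^{1/8}$. One harmless slip: your quadratic-form formula mixes conventions, and with $v=(a,b)$, $\xi=a+\sqrt{-1}b$ it should read $D_{\R}^2u(v,v)=2\operatorname{Re}(\xi^TB\xi)+2\xi^TA\bar\xi$; this does not matter, since the equivalence you actually use is the congruence and block elimination of Lemma~\ref{lem:realcomplex}.
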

	
	In particular, Theorems~\ref{thm:general-interior}
	and~\ref{thm:interior} give a uniform bound for the full real Hessian
	on every fixed negative sublevel set.
		Applying Theorem~\ref{thm:interior} after a complex-affine normalization
	of convex sections yields the following Liouville theorem.
	Real convexity suffices, without any completeness, growth, or uniform
	ellipticity assumption.
	
	\begin{theorem}\label{thm:liouville}
		Let $n\ge1$ and let $u\in C^\infty(\C^n,\R)$ satisfy
		\[
		D_{\R}^2u\ge0,\qquad
		\det(u_{i\bar j})=1\quad\text{on }\C^n.
		\]
		Then $u$ is a real quadratic polynomial.
	\end{theorem}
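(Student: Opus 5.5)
We follow the classical Pogorelov--Calabi route. Theorem~\ref{thm:interior} supplies a uniform Hessian bound on normalized sections. We then conclude with a Liouville argument for uniformly elliptic equations, in which the real Hessian, or at least its relevant part, is shown to be constant.

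The case $n=1$ is elementary. There $u_{z\bar z}=1$, so $\Delta u=4$ and $u-|z|^2$ is harmonic, and real convexity becomes $|u_{zz}|\le u_{z\bar z}=1$. Hence $u_{zz}$ is a bounded entire holomorphic function. By Liouville's theorem it is constant, and $u$ is quadratic.

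Now let $n\ge2$. Convexity plus $\det(u_{i\bar j})=1$ gives $u$ nonaffine, so after subtracting an affine function we may assume $u(0)=0$, $Du(0)=0$ and $u\ge0$. For $h>0$ consider the section $\Omega_h=\{u<h\}$, which is convex and contains $0$.

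\textbf{Boundedness of sections.} If $\Omega_h$ is unbounded, convexity forces $u$ to be constant along a real line direction through every point. The full recession cone of the convex sublevel sets then lies in $\ker D^2_\R u$ everywhere. This is exactly the situation allowed in Theorem~\ref{thm:interior}: we obtain a real linear subspace $V$ with $u$ invariant along $V$, and the quotient of $\overline{\Omega_h}$ is compact. So either alternative of the theorem applies.

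\textbf{Normalization.} By John's lemma, applied on the compact quotient, there is an affine map $T$ with $T(E)\subset\Omega_h\subset T(cE)$ for $E$ the unit ball. We need $T$ to be \emph{complex} affine so that the equation is preserved. We first choose a complex-linear normalization, which is possible up to a dimensional factor since any real ellipsoid lies between two complex ellipsoids comparable up to constants after applying a suitable complex-linear map. We then set
\[
\tilde u(w)=\frac{u(Tw)-h}{h},\qquad \det(\tilde u_{i\bar j})=|\det T|^2h^{-n}.
\]
The Monge--Amp\`ere measure comparison, via the complex comparison principle against quadratic barriers on a normalized convex set, gives $|\det T|^2\approx h^n$. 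Rescaling the constant to $1$ gives a solution on a normalized domain with $|\tilde u|\le C$ and, by convexity, $|\partial\tilde u|\le C$ on $\{\tilde u<-1/2\}$. To obtain the gradient bound on the whole domain, we apply the theorem on a slightly smaller sublevel set. Theorem~\ref{thm:interior} then yields $D^2_\R\tilde u\le C(n)$ on $\{\tilde u<-1/2\}$, and hence $\det(\tilde u_{i\bar j})=1$ makes $(\tilde u_{i\bar j})$ uniformly elliptic there.

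\textbf{Growth and Liouville.} The key step, and the main obstacle, is showing that $\Omega_h$ is comparable to $\sqrt h\,T_1(E)$ for a \emph{fixed} $T_1$, that is, that the sections do not degenerate in shape as $h\to\infty$. We would follow Caffarelli--Pogorelov: uniform ellipticity on normalized sections gives engulfing and doubling of sections, and combining the Hessian bound at scale $h$ with the lower bound from $\det=1$ shows that the eccentricities of $T_h/\sqrt h$ stay bounded, giving $c|z|^2\le u\le C|z|^2$ after one fixed complex-linear change. Then $D^2_\R u$ is globally bounded and $(u_{i\bar j})$ is uniformly elliptic on $\C^n$.

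\textbf{Conclusion.} By Evans--Krylov (the complex Monge--Amp\`ere operator is concave in $D^2_\R u$) applied to rescalings $u_R(z)=R^{-2}u(Rz)$, we get $[D^2_\R u]_{C^\alpha(B_R)}\le CR^{-\alpha}$. Letting $R\to\infty$ shows $D^2_\R u$ is constant, so $u$ is a real quadratic polynomial.
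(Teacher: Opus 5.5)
Your $n=1$ argument is correct and simpler than the paper's: $u_{zz}$ is entire holomorphic and bounded by $u_{z\bar z}=1$. Your closing Evans--Krylov step on $u_R(z)=R^{-2}u(Rz)$ is the paper's. The gaps lie in between.

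First, your normalization claim is false. A complex-linear image $G(B)$ of the unit ball is invariant under $z\mapsto e^{\sqrt{-1}\theta}z$. So $G(B)\subset E\subset\lambda G(B)$ forces $\bigcup_\theta e^{\sqrt{-1}\theta}E\subset\lambda\bigcap_\theta e^{\sqrt{-1}\theta}E$, and for $E=\{\varepsilon^{-2}x_1^2+y_1^2+|z'|^2<1\}$ this gives $\lambda\ge\varepsilon^{-1}$. Such sections really occur. The functions $(1+s)|\operatorname{Re}z|^2+(1-s)|\operatorname{Im}z|^2$, $0\le s\le1$, are convex solutions with $A=I$, and their sections become arbitrarily eccentric, cylindrical for $s=1$. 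Hence a complex-affine rescaling normalizes only the complex Hessian of the John quadratic. The new section contains a ball of dimensional radius but may have arbitrarily large diameter. Then ``by convexity, $|\partial\tilde u|\le C$ on $\{\tilde u<-1/2\}$'' does not follow. A convex function with $-1\le\tilde u\le0$ vanishing on the boundary can reach $-1/2$ arbitrarily close to it, and Alexandrov-type bounds involve the diameter. The missing ingredient is a distance-to-boundary bound depending only on $n$. The paper proves it in Lemma~\ref{lem:section-gradient} by comparing with the barrier $\varepsilon(q-4n^2)-2l/\delta+l^2/\delta^2$ in a strip along a supporting hyperplane. That argument uses only $0\le q\le4n^2$, $I\le(q_{i\bar j})\le4n^2I$, and the complex comparison principle. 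Your treatment of unbounded sections also needs the equation, not just convexity. For a recession direction $e$, convexity only gives $\partial_eu\le0$; you then need $L(\partial_eu)=0$ plus the strong maximum principle at the origin, or Lemma~\ref{lem:rank} with Caffarelli--Nirenberg--Spruck.

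Second, the step you call the main obstacle is only sketched, and its stated target is false. The function $u=2|\operatorname{Re}z|^2$ is a convex entire solution with $A\equiv I$. For every complex-linear $G$, $u\circ G$ vanishes on the nontrivial real subspace $G^{-1}(\sqrt{-1}\R^n)$, so no bound $u\ge c|z|^2$ holds after any complex-linear change. No growth comparison, engulfing, or doubling is needed. The paper normalizes $u(0)=0$, $D_\R u(0)=0$, $A(0)=I$, and rescales $S_h$ by $z=a_h+\sqrt h\,G_hw$ with $G_h=c_h^{1/2}\overline{C_h}^{-1/2}$. Here $C_h$ is the complex Hessian of the John quadratic and $c_h=(\det C_h)^{1/n}$, so $|\det G_h|=1$. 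The origin then corresponds to the minimum point of $v_h$, where $v_h=-1$ and the rescaled complex Hessian is $G_h^T\overline{G_h}$. The Pogorelov bound at that point bounds $G_h$, and $|\det G_h|=1$ then bounds $G_h^{-1}$. Applying the same bound to $G_h^TA(z)\overline{G_h}$ on $S_{h/2}$ gives $A\le C_nI$ on $\C^n$, uniformly in $h$. Finally, $|D_\R^2u|\le4\tr A$ gives the global Hessian bound that your Evans--Krylov argument requires.
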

	
	Theorem~\ref{thm:liouville} contains the real
	J\"orgens--Calabi--Pogorelov theorem as a special case.
	Indeed, if $v\in C^\infty(\R^n)$ is convex and $\det D^2v=1$,
	then $u(z)=4v(\operatorname{Re}z)$ satisfies
	\[
	u_{i\bar j}=v_{ij},\qquad
	D_{\R}^2u=
	\begin{pmatrix}
		4D^2v&0\\
		0&0
	\end{pmatrix}\ge0.
	\]
	Thus the conclusion of Theorem~\ref{thm:liouville} recovers the
	classical quadratic rigidity of $v$.
	
	By the convexity maximum principle in
	Proposition~\ref{prop:convexity}, real convexity in
	Theorem~\ref{thm:liouville} may be assumed only outside a compact set.
	
	\begin{corollary}\label{cor:outside}
		Let $u\in C^\infty(\C^n,\R)$ satisfy
		$\det(u_{i\bar j})=1$ on $\C^n$.
		Assume that there is a compact set $K_0\Subset\C^n$ such that
		$D_{\R}^2u\ge0$ on $\C^n\setminus K_0$.
		Then $u$ is a real quadratic polynomial.
	\end{corollary}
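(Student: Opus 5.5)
The plan is to reduce Corollary~\ref{cor:outside} to Theorem~\ref{thm:liouville} by showing that real convexity on $\C^n\setminus K_0$ propagates into $K_0$. I would use the convexity maximum principle of Proposition~\ref{prop:convexity}, whose exact form comes later in the paper. I take it to say the following: for a solution of $\det(u_{i\bar j})=1$ on a bounded domain $U$ with $D_{\R}^2u\ge0$ near $\partial U$, the real convexity holds throughout $U$. The natural formulation is in terms of the Schur complement $M=A-B\overline{A^{-1}}\overline B$, equivalently the minimum over real unit directions $\xi$ of $u_{\xi\xi}$. Its minimum over $\overline U$ is attained on $\partial U$ because $LM$ has the right structure: the fourth-order cancellation in $LM$ mentioned in the introduction.

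Concretely, I would choose a large ball $U=B_R(0)$ with $K_0\Subset B_{R-1}(0)$, so that $D_{\R}^2u\ge0$ on a neighborhood of $\partial U$. Applying Proposition~\ref{prop:convexity} on $U$ gives $D_{\R}^2u\ge0$ on $U$. Together with the hypothesis outside $K_0$, this gives $D_{\R}^2u\ge0$ on all of $\C^n$. Since $u\in C^\infty(\C^n,\R)$ and $\det(u_{i\bar j})=1$, Theorem~\ref{thm:liouville} applies directly, so $u$ is a real quadratic polynomial.

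The main obstacle is matching the hypotheses of Proposition~\ref{prop:convexity}, in particular its requirement on the boundary. If the proposition only gives $\min_{\overline U}\lambda_{\min}(D_{\R}^2u)\ge\min(0,\min_{\partial U}\lambda_{\min})$, there is nothing more to do. If instead it is stated for strict convexity ($M>0$), I would perturb: replace $u$ by $u_\varepsilon=u+\varepsilon|z|^2$, which is strictly convex outside $K_0$. Its complex Monge--Amp\`ere determinant is then close to $1$ rather than equal to it, so the proposition would have to tolerate such densities. Otherwise I would run its argument on $u$ with the comparison quantity $M+\varepsilon I$ and let $\varepsilon\to0$. A further possibility is that the proposition concerns a degenerate minimal eigenvalue, handled via a viscosity argument at an interior minimum point of $\lambda_{\min}$. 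In that case I would verify at such a point, using $L$ and the equation differentiated twice, that a negative interior minimum contradicts the strong maximum principle. This is the step I would check most carefully.
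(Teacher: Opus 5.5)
Your reduction (propagate convexity into a large ball with Proposition~\ref{prop:convexity}, then apply Theorem~\ref{thm:liouville}) is the same as the paper's. However, there is a genuine gap: you never check the hypothesis of Proposition~\ref{prop:convexity} that actually needs checking. The proposition assumes $(u_{i\bar j})>0$ on $\overline\Omega$, and the argument depends on it in two places. First, Lemma~\ref{lem:realcomplex} (the equivalence $D_\R^2u\ge0\Leftrightarrow M\ge0$) is stated and proved only for $A>0$. Second, the minimum principle for $L=u^{\bar q p}\partial_p\partial_{\bar q}$ requires $L$ to be elliptic, which holds only when $A>0$.

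The corollary assumes only $\det(u_{i\bar j})=1$. This alone allows $A$ to have, for example, two negative eigenvalues at points of $K_0$. Convexity outside $K_0$ gives $A>0$ only there. So before applying the proposition on $B_R$ you must show that $u$ is strictly plurisubharmonic inside $K_0$. The paper does this with a short continuity argument:
\begin{itemize}
\item Since $\det A=1\neq0$ everywhere, no eigenvalue of the continuous Hermitian matrix field $A$ ever reaches zero.
\item Hence the number of negative eigenvalues of $A$ is locally constant, and so constant on the connected space $\C^n$.
\item This number is zero on the nonempty set $\C^n\setminus K_0$, so $A>0$ on all of $\C^n$.
\end{itemize}

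The obstacle you do flag is not one. The proposition needs only non-strict convexity $D_\R^2u\ge0$ on $\partial B_R$. For each constant $\xi$, the function $f_\xi=\xi^*M\xi$ satisfies $Lf_\xi\le0$ by \eqref{e2_9}, and the weak minimum principle carries $f_\xi\ge0$ from the boundary into the ball. So no perturbation is needed. In fact $u+\varepsilon|z|^2$ would break the equation $\det A=1$, on which the identity \eqref{e2_9} depends. No viscosity argument for $\lambda_{\min}(D_\R^2u)$ is needed either.
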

	
	The paper is organized as follows.
	Section~\ref{sec:identities} derives the differential identities and
	proves Theorems~\ref{thm:general-interior} and~\ref{thm:interior}.
	Section~\ref{sec:sections} establishes the section estimates and the
	complex-affine normalization needed to apply the second-order bound
	on expanding sections. In Section~\ref{sec:liouville} we prove
	Theorem~\ref{thm:liouville} and Corollary~\ref{cor:outside}.
	
	\section{Differential identities and estimates}
	\label{sec:identities}
	
	When $n=1$, the equation reduces to the Laplace equation
	$\Delta_{\mathbb R}u=4$, and Theorem~\ref{thm:liouville} follows from
	Yau's gradient estimate~\cite{r8} applied to the nonnegative harmonic
	functions $\partial_e^2u$, where $e$ is a constant real vector.
	Henceforth we assume $n\ge2$.
	
	Let $\Omega\subset\mathbb C^n$ be open and let
	$u\in C^4(\Omega,\mathbb R)$ be strictly plurisubharmonic with
	$\det(u_{i\bar j})=1$ on $\Omega$.
	Write $z^i=x^i+\sqrt{-1}y^i$ and set
	\[
	\partial_i
	=\frac12(\partial_{x^i}-\sqrt{-1}\partial_{y^i}),
	\qquad
	\partial_{\bar i}
	=\frac12(\partial_{x^i}+\sqrt{-1}\partial_{y^i}).
	\]
	Put
	\[
	A=(u_{i\bar j}),\qquad B=(u_{ij})=B^T.
	\]
	Thus $A$ is Hermitian positive definite, $B$ is complex symmetric,
	and $B^*=\overline B$.
	All indices range from $1$ to $n$, and repeated raised and lowered
	indices are summed. For matrices, $T$, $*$, and an overline denote
	transpose, Hermitian transpose, and entrywise complex conjugation,
	respectively.
	
	We write
	\[
	A^{-1}=(u^{\bar j i}),\qquad
	u_{k\bar j}u^{\bar j i}=\delta_k^i,
	\]
	where $\delta_k^i$ is the Kronecker symbol, and use
	$\overline A^{-1}=\overline{A^{-1}}$.
	Since $A^{-1}$ is Hermitian, the $(i,j)$ entry of
	$\overline{A^{-1}}$ is
	$u^{i\bar j}:=u^{\bar j i}=\overline{u^{\bar i j}}$.
	If $H>0$ is Hermitian, $H^{1/2}$ denotes its positive Hermitian
	square root and $H^{-1/2}$ its inverse.
	
	Define
	\[
	L=u^{\bar q p}\partial_p\partial_{\bar q}.
	\]
	Thus $Lf=u^{\bar q p}f_{p\bar q}$ for a scalar function $f$, and
	$L$ acts componentwise on matrix-valued functions.
	For a covector $\eta=(\eta_i)$, set
	\[
	|\eta|_A^2=u^{\bar j i}\eta_i\overline{\eta_j}.
	\]
	Differentiating $\log\det A=0$ gives the basic first-order identity
	\begin{equation}\label{e2_1}
		u^{\bar q p}u_{p\bar q i}=0.
	\end{equation}
	We next record the two second-order identities that will be used below.
	
	\begin{lemma}\label{lem:matrix-identities}
		The equation $\det A=1$ implies
		\begin{align}
			LA
			&=u^{\bar q p}(\partial_pA)A^{-1}(\partial_{\bar q}A)
			=u^{\bar q p}(\partial_{\bar q}B)
			\overline A^{-1}(\partial_p\overline B),
			\label{e2_2}\\
			LB
			&=u^{\bar q p}(\partial_pA)A^{-1}(\partial_{\bar q}B)
			=u^{\bar q p}(\partial_{\bar q}B)
			\overline A^{-1}(\partial_p\overline A).
			\label{e2_3}
		\end{align}
		Moreover,
		\begin{equation}\label{e2_4}
			u^{\bar q p}\partial_{\bar q}
			\bigl(\overline A^{-1}\partial_p\overline B\bigr)=0.
		\end{equation}
	\end{lemma}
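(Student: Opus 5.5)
The plan is to differentiate $\log\det A=0$ twice and then use that partial derivatives commute, so that third and fourth derivatives of $u$ can be moved between slots. Differentiating $\log\det A=0$ once in a general direction gives $\operatorname{tr}(A^{-1}\partial A)=0$, which is \eqref{e2_1}. Differentiating \eqref{e2_1} in $z^k$ and $\bar z^l$ gives
\[
u^{\bar q p}u_{p\bar q k\bar l}=u^{\bar q s}u_{s\bar t\bar l}u^{\bar t p}u_{p\bar q k}.
\]
By commuting derivatives, $u_{p\bar q k\bar l}=\partial_k\partial_{\bar l}u_{p\bar q}=\partial_p\partial_{\bar q}u_{k\bar l}$. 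The left side is therefore $(LA)_{k\bar l}$, and the right side is the $(k,\bar l)$ entry of $u^{\bar q p}(\partial_pA)A^{-1}(\partial_{\bar q}A)$ after relabelling the summation indices. This gives the first expression in \eqref{e2_2}.

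For the second expression in \eqref{e2_2}, observe that $(\partial_pA)_{k\bar s}=u_{k\bar s p}=\partial_{\bar s}u_{kp}=(\partial_{\bar s}B)_{kp}$, and similarly $(\partial_{\bar q}A)_{t\bar l}=u_{t\bar l\bar q}=(\partial_t\overline B)_{\bar q\bar l}$. Substituting these into the summed expression turns
\[
u^{\bar q p}(\partial_pA)_{k\bar s}u^{\bar s t}(\partial_{\bar q}A)_{t\bar l}
=u^{\bar q p}\,u_{kp\bar s}\,u^{\bar s t}\,u_{t\bar q\bar l}.
\]
The index pair $(p,\bar q)$ contracted against $A^{-1}$ and the pair $(\bar s,t)$ contracted against $A^{-1}$ then play symmetric roles. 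Swapping their names identifies the sum with $u^{\bar q p}(\partial_{\bar q}B)_{ks}\,u^{s\bar t}\,(\partial_p\overline B)_{\bar t\bar l}$, using the convention $(\overline{A^{-1}})_{s t}=u^{\bar t s}$. I expect this index bookkeeping, in particular keeping the conjugation conventions for $\overline{A^{-1}}$ straight, to be the main source of error rather than any conceptual difficulty. I will write everything in components and check the positions of the barred indices carefully.

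For \eqref{e2_3} the same method applies. Differentiate \eqref{e2_1} in $z^k$ and $z^l$ to get
\[
u^{\bar q p}u_{p\bar q kl}=u^{\bar q s}u_{s\bar t l}u^{\bar t p}u_{p\bar q k}.
\]
The left side is $(LB)_{kl}$ because $\partial_p\partial_{\bar q}u_{kl}=u_{kl p\bar q}$. On the right, write $u_{p\bar q k}=(\partial_pA)_{k\bar q}$ and $u_{s\bar t l}=(\partial_{\bar t}B)_{sl}$, then regroup (again swapping dummy pairs) to obtain $u^{\bar q p}(\partial_pA)A^{-1}(\partial_{\bar q}B)$. Rewriting the $A$-derivative as $\partial_{\bar\cdot}B$ and the $B$-derivative as $\partial\overline A$ gives the second form, exactly as before.

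Finally, for \eqref{e2_4}, expand with the product rule. We have $\partial_{\bar q}\overline A^{-1}=-\overline A^{-1}(\partial_{\bar q}\overline A)\overline A^{-1}$, so
\[
u^{\bar q p}\partial_{\bar q}\bigl(\overline A^{-1}\partial_p\overline B\bigr)
=-u^{\bar q p}\overline A^{-1}(\partial_{\bar q}\overline A)\overline A^{-1}(\partial_p\overline B)+\overline A^{-1}\,L\overline B.
\]
Take the complex conjugate of \eqref{e2_3}. Since $L$ is a real operator (because $A^{-1}$ is Hermitian), $\overline{LB}=L\overline B$, and the conjugate equals $u^{\bar q p}(\partial_{\bar q}\overline A)A^{-1}\cdots$, with the appropriate conjugate form chosen so that it reads $L\overline B=u^{\bar q p}(\partial_{\bar q}\overline A)\overline A^{-1}(\partial_p\overline B)$. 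To justify this form, conjugate the first expression in \eqref{e2_3} and use $\overline{u^{\bar q p}}=u^{\bar p q}$ to relabel $p\leftrightarrow q$. Multiplying this identity on the left by $\overline A^{-1}$ shows that the two terms cancel, which proves \eqref{e2_4}.
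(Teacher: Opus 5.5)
Your proposal is correct and follows essentially the same route as the paper. You differentiate \eqref{e2_1} once more using the inverse-matrix derivative formula, then commute derivatives and relabel the dummy index pairs to pass between the $A$-forms and the $B$-forms. You obtain \eqref{e2_4}, as the paper does, by conjugating the first expression in \eqref{e2_3}, relabeling $p\leftrightarrow q$, and expanding with the product rule.
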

	
	\begin{proof}
		For any coordinate direction $\alpha$, differentiating $AA^{-1}=I$
		gives
		\begin{equation}\label{e2_5}
			\partial_\alpha u^{\bar j i}
			=-u^{\bar j a}(\partial_\alpha A)_{a\bar b}u^{\bar b i}.
		\end{equation}
		Apply $\partial_{\bar j}$ and $\partial_j$, respectively, to
		\eqref{e2_1}, and use \eqref{e2_5}.
		After commuting derivatives, this yields
		\[
		\begin{aligned}
			(LA)_{i\bar j}
			&=u^{\bar q p}u_{i\bar s p}u^{\bar s t}u_{t\bar j\bar q},\\
			(LB)_{ij}
			&=u^{\bar q p}u_{i\bar s p}u^{\bar s t}u_{tj\bar q}.
		\end{aligned}
		\]
		These are the first matrix expressions in \eqref{e2_2} and
		\eqref{e2_3}.
		The $(i,\bar j)$ component of the second expression in \eqref{e2_2}
		is
		\[
		u^{\bar q p}u_{is\bar q}u^{s\bar t}u_{p\bar t\bar j},
		\]
		and the $(i,j)$ component of the second expression in \eqref{e2_3}
		is
		\[
		u^{\bar q p}u_{is\bar q}u^{s\bar t}u_{j\bar t p}.
		\]
		Using the Hermitian symmetry of $A^{-1}$, commuting the purely
		holomorphic and purely anti-holomorphic derivatives, and renaming
		the summed indices $p,q,s,t$ gives the displayed first expressions.
		
		Finally, taking the conjugate of the first expression in
		\eqref{e2_3} and relabeling $p,q$ gives
		\[
		L\overline B
		=u^{\bar q p}(\partial_{\bar q}\overline A)
		\overline A^{-1}(\partial_p\overline B).
		\]
		Since
		\[
		\partial_{\bar q}(\overline A^{-1})
		=-\overline A^{-1}
		(\partial_{\bar q}\overline A)\overline A^{-1},
		\]
		expanding the left-hand side of \eqref{e2_4} now gives zero.
	\end{proof}
	
	The K\"ahler derivative symmetry and \eqref{e2_1} also give
	\[
	\partial_pu^{\bar q p}
	=-u^{\bar q i}u^{\bar j p}\partial_pA_{i\bar j}
	=-u^{\bar q i}\partial_i\log\det A=0.
	\]
	
	\subsection{The Schur complement and weak real convexity}
	
	Define the Schur complement
	\begin{equation}\label{e2_6}
		M=A-B\overline A^{-1}\overline B.
	\end{equation}
	The following block-matrix criterion relates the real Hessian
	to $A$ and $B$.
	
	\begin{lemma}\label{lem:realcomplex}
		Assume $A>0$ and define $M$ by \eqref{e2_6}.
		Then
		\[
		D_{\mathbb R}^2u\ge0
		\quad\Longleftrightarrow\quad
		\begin{pmatrix}
			A&B\\
			\overline B&\overline A
		\end{pmatrix}\ge0
		\quad\Longleftrightarrow\quad
		M\ge0.
		\]
		The same equivalences hold with every $\ge0$ replaced by $>0$.
	\end{lemma}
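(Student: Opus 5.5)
The plan is to express the real Hessian quadratic form in complex notation, and then apply the Schur complement criterion for Hermitian block matrices.

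For a real tangent vector $X=\sum_i(a^i\partial_{x^i}+b^i\partial_{y^i})$, set $\xi^i=a^i+\sqrt{-1}b^i$, so that $X=\sum_i(\xi^i\partial_i+\overline{\xi^i}\partial_{\bar i})$. Since $u$ is real, the second derivative of $u$ along $X$ is
\[
D_{\mathbb R}^2u(X,X)=u_{ij}\xi^i\xi^j+2u_{i\bar j}\xi^i\overline{\xi^j}+u_{\bar i\bar j}\overline{\xi^i}\,\overline{\xi^j}.
\]
In terms of $w=(\overline\xi,\xi)\in\C^{2n}$, this equals
\[
w^*\begin{pmatrix}A&B\\ \overline B&\overline A\end{pmatrix}w,
\]
once the indices are arranged consistently with the paper's conventions ($A$ Hermitian, $B=B^T$, $\overline B=B^*$). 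I would double-check which of $\xi$ and $\overline\xi$ goes in which slot; the other arrangement gives the conjugate matrix, which has the same positivity.

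The real map $X\mapsto(\overline\xi,\xi)$ is an $\R$-linear isomorphism from $\R^{2n}$ onto the totally real subspace $V=\{(\eta,\overline\eta)\}$. Positivity on $V$ alone is not obviously the same as positivity on all of $\C^{2n}$, so this needs an argument. The block matrix $H$ commutes with the antilinear involution $\sigma(\eta,\zeta)=(\overline\zeta,\overline\eta)$. Any $w\in\C^{2n}$ decomposes as $w=v_1+\sqrt{-1}v_2$ with $v_1,v_2\in V$ (the fixed set of $\sigma$). Then $w^*Hw=v_1^*Hv_1+v_2^*Hv_2$, because the cross terms are purely imaginary conjugate pairs that cancel, using $\sigma$-invariance and the fact that $v^*Hv'$ is real for $v,v'\in V$.

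This gives the first equivalence, including the strict version, since $w\neq0$ forces $v_1\neq0$ or $v_2\neq0$. I expect this step to be the only delicate one, because it depends on keeping the conjugation conventions straight.

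The second equivalence is the standard Schur complement criterion. Since $\overline A>0$, we can write
\[
\begin{pmatrix}A&B\\ \overline B&\overline A\end{pmatrix}
=\begin{pmatrix}I&B\overline A^{-1}\\0&I\end{pmatrix}
\begin{pmatrix}A-B\overline A^{-1}\overline B&0\\0&\overline A\end{pmatrix}
\begin{pmatrix}I&0\\ \overline A^{-1}\overline B&I\end{pmatrix}.
\]
The outer factors are mutually adjoint, because $(B\overline A^{-1})^*=\overline A^{-1}B^*=\overline A^{-1}\overline B$ using $\overline A^{-1}$ Hermitian. So this is a congruence, and the block matrix is $\ge0$ (respectively $>0$) exactly when $M\ge0$ and $\overline A\ge0$ (respectively both $>0$). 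Since $\overline A>0$ holds by assumption, this reduces to $M\ge0$ (respectively $M>0$).
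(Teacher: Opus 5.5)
Your proposal is correct. The Schur-complement step is the paper's block elimination: you write $H=P\,\operatorname{diag}(M,\overline A)\,P^*$, and the paper writes $P^{-1}H(P^{-1})^*=\operatorname{diag}(M,\overline A)$.

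The first equivalence is reached by a different packaging. The paper writes $A$ and $B$ in terms of $u_{xx},u_{xy},u_{yy}$ and exhibits the explicit congruence $H=\frac14T^*D_{\R}^2u\,T$, where $T=\begin{pmatrix}I&I\\ \sqrt{-1}I&-\sqrt{-1}I\end{pmatrix}$ is invertible. Positivity then transfers in one step, tacitly using that a real symmetric matrix is $\ge0$ on $\R^{2n}$ if and only if it is $\ge0$ as a Hermitian form on $\C^{2n}$.

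You instead identify the real Hessian form with $w^*Hw$ on the totally real subspace $V$, and then use the $\sigma$-invariance of $H$ to pass from $V$ to all of $\C^{2n}$. Since $T(\overline\xi,\xi)=2(a,b)$, the matrix $T$ carries your $V$ onto $\R^{2n}$. Your decomposition $w=v_1+\sqrt{-1}v_2$ is therefore exactly that tacit real-versus-complex fact transported to $V$, and the two arguments are equivalent in substance.

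The paper's version is more computational but needs no separate extension step. Yours avoids writing $A$ and $B$ in real coordinates and makes the role of the real structure explicit. Your slot ordering $w=(\overline\xi,\xi)$ is the correct one, so the hedge there is unnecessary.
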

	
	\begin{proof}
		Write the real Hessian in blocks as
		\[
		D_{\mathbb R}^2u=
		\begin{pmatrix}
			u_{xx}&u_{xy}\\
			u_{xy}^T&u_{yy}
		\end{pmatrix},
		\]
		where
		\[
		u_{xx}=(u_{x^ix^j}),\qquad
		u_{xy}=(u_{x^iy^j}),\qquad
		u_{yy}=(u_{y^iy^j}).
		\]
		Direct differentiation gives
		\[
		\begin{aligned}
			A&=\frac14(u_{xx}+u_{yy})
			+\frac{\sqrt{-1}}4(u_{xy}-u_{xy}^T),\\
			B&=\frac14(u_{xx}-u_{yy})
			-\frac{\sqrt{-1}}4(u_{xy}+u_{xy}^T).
		\end{aligned}
		\]
		One checks by block multiplication that
		\[
		\begin{pmatrix}
			A&B\\
			\overline B&\overline A
		\end{pmatrix}
		=
		\frac14
		\begin{pmatrix}
			I&-\sqrt{-1}I\\
			I&\sqrt{-1}I
		\end{pmatrix}
		\begin{pmatrix}
			u_{xx}&u_{xy}\\
			u_{xy}^T&u_{yy}
		\end{pmatrix}
		\begin{pmatrix}
			I&I\\
			\sqrt{-1}I&-\sqrt{-1}I
		\end{pmatrix}.
		\]
		The first and last block matrices are Hermitian adjoints and
		invertible, so positive semidefiniteness, as well as positive
		definiteness, is preserved by this congruence.
		Since $\overline A>0$, a direct block elimination gives
		\[
		\begin{pmatrix}
			I&-B\overline A^{-1}\\
			0&I
		\end{pmatrix}
		\begin{pmatrix}
			A&B\\
			\overline B&\overline A
		\end{pmatrix}
		\begin{pmatrix}
			I&0\\
			-\overline A^{-1}\overline B&I
		\end{pmatrix}
		=
		\begin{pmatrix}
			M&0\\
			0&\overline A
		\end{pmatrix}.
		\]
		The two outer matrices are invertible and conjugate transposes
		of each other. Hence
		\[
		\begin{pmatrix}
			A&B\\
			\overline B&\overline A
		\end{pmatrix}\ge0
		\quad\Longleftrightarrow\quad
		M\ge0,
		\]
		and likewise in the positive definite case.
		Because $B^T=B$, one has $B^*=\overline B$, so $M$ is Hermitian.
	\end{proof}
	
	For each derivative direction $p$, define
	\begin{equation}\label{e2_7}
		\mathbb B^{(p)}
		=\partial_pB-B\overline A^{-1}\partial_p\overline A.
	\end{equation}
	The parenthesized superscript denotes the derivative direction,
	not a contravariant tensor index.
	The inverse-matrix derivative gives
	\begin{equation}\label{e2_8}
		\begin{aligned}
			\mathbb B^{(p)}\overline A^{-1}
			&=\partial_p(B\overline A^{-1}),\\
			\overline A^{-1}(\mathbb B^{(q)})^*
			&=\partial_{\bar q}(\overline A^{-1}\overline B).
		\end{aligned}
	\end{equation}
	
	\begin{lemma}\label{lem:LM}
		All fourth-order derivatives cancel in $LM$, and
		\begin{equation}\label{e2_9}
			LM=-u^{\bar q p}\mathbb B^{(p)}
			\overline A^{-1}(\mathbb B^{(q)})^*\le0.
		\end{equation}
		Here the last inequality is an inequality of Hermitian matrices.
	\end{lemma}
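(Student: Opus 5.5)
Write $N=B\overline A^{-1}\overline B$, so that $M=A-N$ and $LM=LA-LN$. Since $L=u^{\bar q p}\partial_p\partial_{\bar q}$ and $\partial_p u^{\bar q p}=0$ (and its conjugate), the Leibniz rule applied to the product $B\cdot(\overline A^{-1}\overline B)$ gives
\[
LN=(LB)\,\overline A^{-1}\overline B
+u^{\bar q p}\,\partial_pB\;\partial_{\bar q}(\overline A^{-1}\overline B)
+u^{\bar q p}\,\partial_{\bar q}B\;\partial_p(\overline A^{-1}\overline B)
+B\,L(\overline A^{-1}\overline B).
\]
The last term vanishes by \eqref{e2_4}, after writing $L=u^{\bar q p}\partial_p\partial_{\bar q}$ with the derivatives in the order $\partial_{\bar q}\partial_p$ (they commute). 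This removes the fourth-order derivatives of $\overline B$ and $\overline A$ hidden in $L(\overline A^{-1}\overline B)$.

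The remaining fourth-order terms are $LA$ and $LB$. By \eqref{e2_2} and \eqref{e2_3}, both are already quadratic in third derivatives, so after this substitution every term in $LM$ is a product of third derivatives. This settles the cancellation claim. To identify the result, I will expand $\partial_p(\overline A^{-1}\overline B)=\overline A^{-1}\partial_p\overline B-\overline A^{-1}\partial_p\overline A\,\overline A^{-1}\overline B$ and use \eqref{e2_8}:
\[
\partial_{\bar q}(\overline A^{-1}\overline B)=\overline A^{-1}(\mathbb B^{(q)})^*.
\]

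Next I expand the target $-u^{\bar q p}\mathbb B^{(p)}\overline A^{-1}(\mathbb B^{(q)})^*$ with $\mathbb B^{(p)}=\partial_pB-B\overline A^{-1}\partial_p\overline A$. This produces four terms:
\[
\begin{gathered}
(\partial_pB)\overline A^{-1}(\partial_{\bar q}\overline B),\qquad
(\partial_pB)\overline A^{-1}(\partial_{\bar q}\overline A)\overline A^{-1}\overline B,\\
B\overline A^{-1}(\partial_p\overline A)\overline A^{-1}(\partial_{\bar q}\overline B),\qquad
B\overline A^{-1}(\partial_p\overline A)\overline A^{-1}(\partial_{\bar q}\overline A)\overline A^{-1}\overline B.
\end{gathered}
\]
Each carries the prefactor $u^{\bar q p}$ and the appropriate sign. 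On the other side, I write $LA$ in its second form from \eqref{e2_2}, $u^{\bar q p}(\partial_{\bar q}B)\overline A^{-1}(\partial_p\overline B)$, and $LB$ in its second form from \eqref{e2_3}, so that $(LB)\overline A^{-1}\overline B=u^{\bar q p}(\partial_{\bar q}B)\overline A^{-1}(\partial_p\overline A)\overline A^{-1}\overline B$. The two middle terms of $LN$ become explicit after inserting the expressions above. Then I match term by term:

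\emph{(i)} the $LA$ term against the $\partial_{\bar q}B\,\overline A^{-1}\partial_p\overline B$ piece of the third term of $LN$;

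\emph{(ii)} the $LB$ term against its companion;

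\emph{(iii)} what survives against the four pieces of $u^{\bar q p}\mathbb B^{(p)}\overline A^{-1}(\mathbb B^{(q)})^*$.

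The main obstacle is this bookkeeping. The two derivative orderings $\partial_p$ versus $\partial_{\bar q}$ on $B$ are not interchangeable, since $\partial_{\bar q}B$ contains mixed third derivatives $u_{ij\bar q}$, which differ from $\partial_{\bar q}\overline B$. So each term must be written in the matching form using the alternative expressions in Lemma~\ref{lem:matrix-identities}. I expect the terms $u^{\bar q p}\partial_{\bar q}B(\cdots)$ to cancel exactly between $LA$, $(LB)\overline A^{-1}\overline B$ and the third term of $LN$. The $\partial_pB$-type terms should assemble into the square $\mathbb B^{(p)}\overline A^{-1}(\mathbb B^{(q)})^*$, with the fully $\overline A$-derivative piece supplied by expanding $\partial_{\bar q}(\overline A^{-1}\overline B)$ via \eqref{e2_8}. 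If one first-form/second-form choice leaves a residual, I will switch which form of \eqref{e2_2} or \eqref{e2_3} is used for that term.

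Finally, for the sign, let $v$ be a vector and set $w_p=\overline A^{-1/2}(\mathbb B^{(p)})^*v$. Then
\[
v^*\bigl(u^{\bar q p}\mathbb B^{(p)}\overline A^{-1}(\mathbb B^{(q)})^*\bigr)v=u^{\bar q p}\langle w_q,w_p\rangle .
\]
This is the trace of the product of the positive Hermitian matrix $(u^{\bar q p})$ with the Gram matrix $(\langle w_q,w_p\rangle)$, up to the index ordering, and is therefore nonnegative. Hence $LM\le0$.
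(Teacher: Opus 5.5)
There is a genuine gap where you discard $B\,L(\overline A^{-1}\overline B)$ ``by \eqref{e2_4}''. That identity concerns $\overline A^{-1}\partial_p\overline B$. However,
\[
\partial_p(\overline A^{-1}\overline B)=\overline A^{-1}\partial_p\overline B-\overline A^{-1}(\partial_p\overline A)\overline A^{-1}\overline B ,
\]
and the second piece does not drop out. Apply $u^{\bar q p}\partial_{\bar q}$ and use three facts:
\eqref{e2_4};
the companion identity \eqref{e2_10}, namely $u^{\bar q p}\partial_{\bar q}(\overline A^{-1}\partial_p\overline A)=0$, which is proved the same way from the conjugate of \eqref{e2_2} but is not part of Lemma~\ref{lem:matrix-identities};
and \eqref{e2_8}.
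This gives
\[
L(\overline A^{-1}\overline B)=-u^{\bar q p}\,\overline A^{-1}(\partial_p\overline A)\,\overline A^{-1}(\mathbb B^{(q)})^*,
\]
which is nonzero in general.

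This is exactly the term that supplies the $-B\overline A^{-1}\partial_p\overline A$ half of $\mathbb B^{(p)}$. Without it, your matchings (i) and (ii) do work, but what survives in (iii) is only $LM=-u^{\bar q p}(\partial_pB)\overline A^{-1}(\mathbb B^{(q)})^*$. These are the two of your four pieces that start with $\partial_pB$. The two pieces starting with $B\overline A^{-1}(\partial_p\overline A)$ never appear, and the resulting expression is not Hermitian in general. Switching between the forms in \eqref{e2_2} and \eqref{e2_3} cannot help, since none of them contains an undifferentiated $B$.

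With the corrected value of $L(\overline A^{-1}\overline B)$, the computation closes. Your first and third Leibniz terms add up to $LA$. The second and fourth add up to $u^{\bar q p}\mathbb B^{(p)}\overline A^{-1}(\mathbb B^{(q)})^*$. Then $LM=LA-LN$ gives \eqref{e2_9}, and the fourth-order cancellation now comes from \eqref{e2_4} and \eqref{e2_10} together. Once repaired, your argument uses the same identities as the paper, which organizes them via \eqref{e2_12} and $u^{\bar q p}\partial_{\bar q}\mathbb B^{(p)}=0$. Your positivity argument for the sign is fine.
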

	
	\begin{proof}
		The same calculation as in \eqref{e2_4}, now using the conjugate
		of \eqref{e2_2}, gives
		\begin{equation}\label{e2_10}
			u^{\bar q p}\partial_{\bar q}
			\bigl(\overline A^{-1}\partial_p\overline A\bigr)=0.
		\end{equation}
		Consequently, \eqref{e2_3} and \eqref{e2_10} imply
		\begin{equation}\label{e2_11}
				u^{\bar q p}\partial_{\bar q}\mathbb B^{(p)}
				={}LB
				-u^{\bar q p}(\partial_{\bar q}B)
				\overline A^{-1}(\partial_p\overline A)-Bu^{\bar q p}\partial_{\bar q}
				\bigl(\overline A^{-1}\partial_p\overline A\bigr)
				=0.
		\end{equation}
		Differentiating \eqref{e2_6} directly gives
		\begin{equation}\label{e2_12}
			\begin{aligned}
				\partial_pM
				={}&\partial_pA-(\partial_pB)\overline A^{-1}\overline B
				+B\overline A^{-1}(\partial_p\overline A)
				\overline A^{-1}\overline B-B\overline A^{-1}\partial_p\overline B\\
				={}&\partial_pA-\mathbb B^{(p)}\overline A^{-1}\overline B
				-B\overline A^{-1}\partial_p\overline B.
			\end{aligned}
		\end{equation}
		Apply $u^{\bar q p}\partial_{\bar q}$ to this identity.
		The product rule gives
		\[
		\begin{aligned}
			LM={}&LA
			-u^{\bar q p}(\partial_{\bar q}\mathbb B^{(p)})
			\overline A^{-1}\overline B
			-u^{\bar q p}\mathbb B^{(p)}
			\partial_{\bar q}(\overline A^{-1}\overline B)\\
			&-u^{\bar q p}(\partial_{\bar q}B)
			\overline A^{-1}(\partial_p\overline B)
			-Bu^{\bar q p}\partial_{\bar q}
			\bigl(\overline A^{-1}\partial_p\overline B\bigr).
		\end{aligned}
		\]
		The second term vanishes by \eqref{e2_11}, and the last by
		\eqref{e2_4}.
		The first and fourth terms cancel by the second expression
		in \eqref{e2_2}.
		Finally, \eqref{e2_8} gives
		\[
		\partial_{\bar q}(\overline A^{-1}\overline B)
		=\overline A^{-1}(\mathbb B^{(q)})^*,
		\]
		which proves the equality in \eqref{e2_9}.
	\end{proof}
	
	We have the following convexity maximum principle.
	
	\begin{proposition}\label{prop:convexity}
		Let $\Omega\Subset\mathbb C^n$ be a connected domain and let
		$u\in C^\infty(\overline\Omega,\mathbb R)$ satisfy
		$(u_{i\bar j})>0$ and $\det(u_{i\bar j})=1$ on $\overline\Omega$.
		If $D_{\mathbb R}^2u\ge0$ on $\partial\Omega$, then
		$D_{\mathbb R}^2u\ge0$ on $\overline\Omega$.
	\end{proposition}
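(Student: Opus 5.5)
By Lemma~\ref{lem:realcomplex}, real convexity $D_{\mathbb R}^2u\ge0$ is equivalent to $M\ge0$, where $M=A-B\overline A^{-1}\overline B$. The plan is to prove $M\ge0$ on $\overline\Omega$ by a maximum principle for the smallest eigenvalue of $M$, using Lemma~\ref{lem:LM}: $LM\le0$ as Hermitian matrices. In other words, $M$ is a matrix-valued $L$-superharmonic function. The key point is that the operator $L=u^{\bar q p}\partial_p\partial_{\bar q}$ is uniformly elliptic on the compact set $\overline\Omega$, since $A>0$ is smooth there. Note also that $L$ has no first-order terms, so the classical weak minimum principle applies to scalar $L$-superharmonic functions.

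Concretely, fix a constant vector $\xi\in\mathbb C^n$ and consider the scalar function
\[
f_\xi=\xi^*M\xi=\overline{\xi_i}M_{i\bar j}\xi_j .
\]
It is real-valued because $M$ is Hermitian. Since $\xi$ is constant and $L$ acts componentwise,
\[
Lf_\xi=\xi^*(LM)\xi=-u^{\bar q p}\,\xi^*\mathbb B^{(p)}\overline A^{-1}(\mathbb B^{(q)})^*\xi\le0
\]
by \eqref{e2_9}. So $f_\xi$ is $L$-superharmonic on $\Omega$ and continuous on $\overline\Omega$. The weak minimum principle for the elliptic operator $L$ (with real coefficients after writing it in real form: $L$ is $\tfrac14$ times a positive definite real second-order operator in $x,y$, with no lower-order terms) gives $\min_{\overline\Omega}f_\xi=\min_{\partial\Omega}f_\xi$. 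On $\partial\Omega$ we have $D_{\mathbb R}^2u\ge0$, hence $M\ge0$ there by Lemma~\ref{lem:realcomplex}, so $f_\xi\ge0$ on $\partial\Omega$. It follows that $f_\xi\ge0$ on $\overline\Omega$ for every $\xi$, i.e.\ $M\ge0$ on $\overline\Omega$, and Lemma~\ref{lem:realcomplex} converts this back to $D_{\mathbb R}^2u\ge0$. Connectedness is not even essential here; boundedness, which ensures compactness of $\overline\Omega$, is what is used.

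The only point needing care is the claim that $L$, as a real operator, is elliptic with no drift. I would verify this as follows. For real $f$,
\[
u^{\bar q p}f_{p\bar q}=\tfrac14\,a^{ab}\partial_a\partial_b f
\]
with $(a^{ab})$ the real symmetric matrix associated with the Hermitian positive form $A^{-1}$, which is positive definite. The minimum principle is then the standard one: if $f_\xi$ had an interior minimum below its boundary minimum, perturb to $f_\xi-\varepsilon|x|^2$ to get a strict subsolution contradiction at the interior minimum point. I expect no real obstacle beyond this; the substantive analytic content, the cancellation of fourth-order terms and the sign of $LM$, is already contained in Lemma~\ref{lem:LM}.
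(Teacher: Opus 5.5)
Your proposal is correct and follows the paper's proof essentially verbatim. You test $M$ against constant vectors $\xi$, use $Lf_\xi\le0$ from Lemma~\ref{lem:LM} and the weak minimum principle for the elliptic operator $L$, then convert back via Lemma~\ref{lem:realcomplex}; your extra verification of the real form of $L$ and the $f_\xi-\varepsilon|x|^2$ perturbation argument correctly fill in the standard details the paper leaves implicit.
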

	
	\begin{proof}
		By Lemma~\ref{lem:realcomplex}, the boundary hypothesis is
		equivalent to $M\ge0$ on $\partial\Omega$.
		Fix a constant vector $\xi\in\mathbb C^n$ and set
		$f_\xi=\xi^*M\xi$.
		Lemma~\ref{lem:LM} gives $Lf_\xi\le0$.
		Since $A>0$ on the compact set $\overline\Omega$, the operator $L$
		is uniformly elliptic there.
		The minimum principle and $f_\xi\ge0$ on $\partial\Omega$ imply
		$f_\xi\ge0$ in $\Omega$.
		Since $\xi$ is arbitrary, $M\ge0$ on $\overline\Omega$, and
		Lemma~\ref{lem:realcomplex} gives $D_{\mathbb R}^2u\ge0$.
	\end{proof}
	
	We record the two global consequences of \eqref{e2_9} needed
	for an entire weakly convex solution.
	
	\begin{lemma}\label{lem:rank}
		Let $u\in C^\infty(\mathbb C^n,\mathbb R)$ satisfy
		$D_{\mathbb R}^2u\ge0$ and $\det A=1$.
		Then both $\operatorname{rank}M$ and
		$\operatorname{rank}D_{\mathbb R}^2u$ are constant on $\mathbb C^n$.
	\end{lemma}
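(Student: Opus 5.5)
We aim to show that the rank of $M$ is locally constant. Since $\C^n$ is connected, this makes it globally constant. The rank of $D_{\R}^2u$ is then handled through the block matrix of Lemma~\ref{lem:realcomplex}. By that lemma and the block elimination in its proof,
\[
\operatorname{rank}\begin{pmatrix}A&B\\ \overline B&\overline A\end{pmatrix}
=\operatorname{rank}M+\operatorname{rank}\overline A=\operatorname{rank}M+n .
\]
The real Hessian is congruent, via an invertible complex matrix, to this block matrix, so $\operatorname{rank}D_{\R}^2u=n+\operatorname{rank}M$. Hence it suffices to prove that $\operatorname{rank}M$ is constant.

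By Lemma~\ref{lem:realcomplex}, the hypothesis gives $M\ge0$, and Lemma~\ref{lem:LM} gives $LM\le0$, where $L$ is locally uniformly elliptic with smooth coefficients. This is exactly the setting of the constant rank theorem for positive semidefinite matrix-valued supersolutions. Let $\ell=\min_{\C^n}\operatorname{rank}M$, and let $\sigma_{\ell+1}(M)$ denote the $(\ell+1)$-st elementary symmetric function of the eigenvalues of $M$. Then $\sigma_{\ell+1}(M)\ge0$, and the zero set $Z=\{\sigma_{\ell+1}(M)=0\}$ is nonempty and closed. We show that $Z$ is open, which forces $Z=\C^n$ and hence $\operatorname{rank}M\equiv\ell$.

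Near a point $z_0\in Z$, we diagonalize $M$ so that its eigenvalues split into $\ell$ "good" eigenvalues, bounded below by some $c>0$, and $n-\ell$ "bad" eigenvalues that are small. Set $\phi=\sigma_{\ell+1}(M)$, which is comparable to $\sum_{\text{bad }k}M_{k\bar k}$. The goal is a differential inequality
\[
L\phi\le C\phi+C|\partial\phi|
\]
on a neighborhood of $z_0$; the strong maximum principle then gives $\phi\equiv0$ near $z_0$. Deriving this inequality has two parts.
\begin{itemize}
\item The second-order part uses the formula for $L$ of $\sigma_{\ell+1}$: the term $\sum_{\text{bad }k}(LM)_{k\bar k}$ is $\le0$ by \eqref{e2_9}.
\item The first-order terms $-\sum\partial M\,\overline\partial M/(\lambda_i+\lambda_j)$ coming from the concavity of $\sigma_{\ell+1}/\sigma_\ell$ must be controlled.
\end{itemize}

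An alternative, simpler route would be a direct argument: if $\xi^*M\xi=0$ at a point for a constant vector $\xi$, then $f_\xi=\xi^*M\xi\ge0$ satisfies $Lf_\xi\le0$ and attains an interior minimum of $0$. The strong minimum principle then gives $f_\xi\equiv0$, hence $M\xi\equiv0$ everywhere because $M\ge0$. However, this only tracks kernel vectors that are constant, and the kernel of $M$ may rotate from point to point. So the full constant-rank machinery is needed.

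The main obstacle is controlling the gradient terms in the constant rank computation. Where $\lambda_k$ is small, $M\ge0$ forces $\partial_pM_{k\bar k}=O(\sqrt{\phi})$; the terms involving $\partial_pM_{k\bar j}$, with $k$ bad and $j$ good, appear with a favorable sign through the matrix concavity; and the remaining terms are bounded by $C(\phi+|\partial\phi|)$. I expect these standard Caffarelli--Friedman / Korevaar--Lewis estimates to go through. The reason is that $LM$ in \eqref{e2_9} contains no fourth-order terms and is already nonpositive, so no structural condition beyond $LM\le0$ is needed. The work is therefore to reproduce that computation carefully for Hermitian $M$ and the operator $L$.
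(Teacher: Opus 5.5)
Your reduction $\operatorname{rank}D_{\R}^2u=n+\operatorname{rank}M$ via block elimination is the same as the paper's. However, your reason for rejecting the direct argument is wrong, and that argument is exactly the paper's proof. The frozen-vector test does not assume a constant kernel in advance. Fix an arbitrary $x_0$ and an arbitrary $\xi\in\ker M(x_0)$, and treat $\xi$ as a constant vector. Since $L$ has scalar coefficients and acts entrywise, \eqref{e2_9} gives $L(\xi^*M\xi)=\xi^*(LM)\xi\le0$. The strong minimum principle and connectedness then give $\xi^*M\xi\equiv0$, and $M\ge0$ upgrades this to $M(x)\xi=0$ for every $x$. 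Hence $\ker M(x_0)\subset\ker M(x)$ for all $x$, and exchanging the two points gives equality. So a rotating kernel is not a case the argument overlooks: the argument rules it out, and proves the stronger fact that $\ker M$ is a fixed subspace.

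Your constant-rank route is a genuinely different proof, and it does go through. Your account of the gradient terms is off, though: for $\phi=\sigma_{\ell+1}(M)$ no quotient $\sigma_{\ell+1}/\sigma_\ell$ and no denominators $\lambda_i+\lambda_j$ appear. At a point where $M$ is diagonal, with $\sigma_m(\lambda|\cdot)$ denoting elementary symmetric functions with the indicated eigenvalues omitted,
\[
L\phi=\sum_i\sigma_\ell(\lambda|i)(LM)_{i\bar i}
+\sum_{i\ne k}\sigma_{\ell-1}(\lambda|ik)\,u^{\bar qp}\bigl(\partial_pM_{k\bar k}\,\partial_{\bar q}M_{i\bar i}-\partial_pM_{k\bar i}\,\partial_{\bar q}M_{i\bar k}\bigr).
\]
Each piece is controlled as follows.
\begin{itemize}
\item The first sum is $\le0$.
\item The off-diagonal terms equal $-\sigma_{\ell-1}(\lambda|ik)|\partial M_{k\bar i}|_A^2\le0$.
\item Good--good pairs carry $O(\phi)$ coefficients.
\item Bad--bad pairs are $O(\phi)$ by $|\partial M_{k\bar k}|^2\le CM_{k\bar k}$.
\item Good--bad diagonal pairs are $O(\phi+|\partial\phi|)$, because $\partial\phi=\lambda_1\cdots\lambda_\ell\sum_{k>\ell}\partial M_{k\bar k}+O(\phi)$.
\end{itemize}
What this machinery buys is robustness. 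It is what you need when the favorable sign is available only after projecting onto the pointwise null directions, for instance with nonlinear structure or error terms, where frozen vectors fail. Here, though, \eqref{e2_9} is an exact matrix inequality for a linear operator. So the heavy computation yields only constancy of the rank, while the paper's two-line argument yields a constant kernel.
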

	
	\begin{proof}
		Since $D_{\mathbb R}^2u\ge0$ and $\det A=1$, the matrix $A$ is
		positive definite.
		Lemma~\ref{lem:realcomplex} then gives $M\ge0$.
		Fix $x_0\in\mathbb C^n$ and $\xi\in\ker M(x_0)$, and regard
		$\xi$ as a constant vector.
		The function $f_\xi=\xi^*M\xi$ satisfies
		$f_\xi\ge0$, $f_\xi(x_0)=0$, and $Lf_\xi\le0$ by \eqref{e2_9}.
		
		Since $A$ is smooth and positive definite, $L$ is uniformly
		elliptic on sufficiently small balls.
		At any zero of $f_\xi$, the strong minimum principle shows that
		$f_\xi$ vanishes in a neighborhood of that point.
		Thus the zero set of $f_\xi$ is nonempty, open, and closed.
		The connectedness of $\mathbb C^n$ gives $f_\xi\equiv0$.
		Since $M\ge0$, this implies
		$M(x)\xi=0$ for every $x\in\mathbb C^n$.
		
		Taking $x_0=x$ in this argument gives
		$\ker M(x)\subset\ker M(y)$ for all $x,y\in\mathbb C^n$.
		Applying the same inclusion with $x$ and $y$ reversed gives
		$\ker M(x)=\ker M(y)$.
		Therefore $\operatorname{rank}M$ is constant.
		
		By the same block elimination as above,
		\[
		\begin{pmatrix}
			I&-B\overline A^{-1}\\
			0&I
		\end{pmatrix}
		\begin{pmatrix}
			A&B\\
			\overline B&\overline A
		\end{pmatrix}
		\begin{pmatrix}
			I&0\\
			-\overline A^{-1}\overline B&I
		\end{pmatrix}
		=
		\begin{pmatrix}
			M&0\\
			0&\overline A
		\end{pmatrix}.
		\]
		Since the two outer matrices are invertible and $\overline A$
		has rank $n$,
		\[
		\operatorname{rank}
		\begin{pmatrix}
			A&B\\
			\overline B&\overline A
		\end{pmatrix}
		=n+\operatorname{rank}M.
		\]
		It is congruent to $D_{\mathbb R}^2u$ by the explicit formula
		in the proof of Lemma~\ref{lem:realcomplex}; hence
		$\operatorname{rank}D_{\mathbb R}^2u$ is constant as well.
	\end{proof}
	
For the solution of Theorem~\ref{thm:liouville},
Lemma~\ref{lem:rank} gives
$\operatorname{rank}D_{\R}^2u\equiv r$.
We use the following consequence of Caffarelli--Nirenberg--Spruck
\cite[(4) and Lemma~2]{r25}: if $f$ is an entire convex solution of
$\det D^2f=0$ on $\mathbb R^N$, then there exists a nonzero vector
$e\in\mathbb R^N$ such that
$$
D^2f(x)e=0
\qquad\text{for every }x\in\mathbb R^N.
$$
If $r<2n$, applying this result to $u$ gives a nonzero fixed direction
$e$ with $D_{\R}^2u(x)e=0$ for every $x\in\C^n$.
Hence $\partial_eu$ is constant, and $u$ splits off an affine variable
in the direction $e$.
The remaining entire convex function still has Hessian rank $r$.
Iterating this argument splits off $2n-r$ affine directions, while in
the remaining $r$ variables the Hessian is positive definite.
Thus, including the case $r=2n$, there is a fixed real subspace
\begin{equation}\label{e2_16}
	\mathcal K:=\ker D_{\R}^2u(x)\qquad(x\in\C^n).
\end{equation}

	We next record the finite-dimensional inequality used in
	the second-order estimate below.
	For a complex matrix $T$, write
	$\|T\|_{\HS}^2=\tr(TT^*)$.
	
	\begin{lemma}\label{lem:tracefree}
		Let $c\ge0$, let $C\in\C^{n\times n}$ satisfy $\tr C=0$,
		and let $e,k\in\C^n$ satisfy $|e|=1$ and $|k|\le1$.
		Then
		\begin{equation}\label{e2_18}
			|cCe+C^*k|^2
			\le(c^2+n-1)\|C\|_{\mathrm{HS}}^2.
		\end{equation}
	\end{lemma}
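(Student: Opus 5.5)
The plan is a duality argument in the Hilbert--Schmidt inner product $\langle X,Y\rangle=\tr(XY^*)$. The trace-free hypothesis enters only by letting us subtract a multiple of the identity from the test matrix.

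A first attempt would bound $|Ce|$ and $|C^*k|$ separately, using $\|C\|_{\mathrm{op}}^2\le\frac{n-1}{n}\|C\|_{\HS}^2$ for trace-free $C$. That inequality is false: the nilpotent $C=e_1e_2^*$ has operator norm $1=\|C\|_{\HS}$. The same example with $e=e_2$, $k=e_1$ gives $|cCe+C^*k|^2=|ce_1+e_2|^2=c^2+1$. So \eqref{e2_18} is sharp, and the two terms must be treated jointly.

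Fix a unit vector $w\in\C^n$. Then
\[
w^*(cCe+C^*k)=c\,\tr(C\,ew^*)+\tr(C\,kw^*)^{\!*}\!,
\]
where in the second term one uses $w^*C^*k=\overline{k^*Cw}=\overline{\tr(C\,wk^*)}$. Up to complex conjugation of the second term, which can be absorbed by replacing $w$ by a suitably rotated vector, this is $\langle C,X\rangle$ for a rank-$\le2$ matrix of the form
\[
X=c\,w e^*+k' w^*,\qquad |k'|\le1 .
\]
The exact placement of stars is a bookkeeping point I will fix when writing. Since $\tr C=0$, we have $\langle C,X\rangle=\langle C,X-\tfrac{\tr X}{n}I\rangle$. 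Cauchy--Schwarz and taking the supremum over $w$ then give
\[
|cCe+C^*k|^2\le\|C\|_{\HS}^2\sup_{|w|=1}\Bigl(\|X\|_{\HS}^2-\tfrac1n|\tr X|^2\Bigr).
\]

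It remains to prove $\|X\|_{\HS}^2-\frac1n|\tr X|^2\le c^2+n-1$. Put $a=e^*w$ and $b=w^*k'$, so $|a|\le1$ and $|b|\le|k'|\le1$. A direct expansion gives $\|X\|_{\HS}^2=c^2+|k'|^2+2c\operatorname{Re}(\bar a b)$, up to the conjugation convention, and $|\tr X|^2=|ca+b|^2$. Hence the quantity equals
\[
c^2+|k'|^2-\tfrac{c^2|a|^2+|b|^2}{n}+2c\bigl(1-\tfrac1n\bigr)\operatorname{Re}(\bar ab).
\]
Using $|k'|\le1$ and writing $t=c|a|\ge0$, $y=|b|\in[0,1]$, it suffices to show
\[
q(t):=\tfrac{t^2}{n}-\tfrac{2(n-1)}{n}ty+\tfrac{y^2}{n}+n-2\ge0 .
\]
The discriminant of $q$ in $t$ is, up to a factor $4$,
\[
\tfrac{(n-1)^2y^2}{n^2}-\tfrac1n\Bigl(\tfrac{y^2}{n}+n-2\Bigr)=\tfrac{(n-2)(y^2-1)}{n}\le0
\]
for $n\ge2$ and $y\le1$. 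So $q\ge0$, which proves \eqref{e2_18}.

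I expect the main obstacle to be the first step: choosing the right test matrix $X$ and getting the conjugation conventions consistent, so that both $cCe$ and $C^*k$ become pairings against the same $C$. Once that is in place, the trace-free reduction and the scalar discriminant check are routine.
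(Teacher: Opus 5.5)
Your proposal is correct and is essentially the paper's own proof. The paper's test matrix $Q_\theta=c\,e^{\sqrt{-1}\theta}ve^*+e^{-\sqrt{-1}\theta}kv^*$ is exactly your $X=cwe^*+kw^*$ with $w=e^{\sqrt{-1}\theta}v$; the paper absorbs the conjugation in the $C^*k$ term with that phase together with the real pairing $\operatorname{Re}\tr(CQ_\theta^*)$, removes the scalar part using $\tr C=0$, and its identity $(cr-(n-1)s)^2+n(n-2)(1-s^2)\ge 0$ is your discriminant check written as a completed square with $t=cr$, $y=s$. One minor aside: your example $C=e_1e_2^*$ shows that the inequality is sharp only when $n=2$.
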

	
	\begin{proof}
		Put $w=cCe+C^*k$.
		For a unit vector $v\in\C^n$ and $\theta\in\R$, set
		\[
		Q_\theta
		=ce^{\sqrt{-1}\theta}ve^*
		+e^{-\sqrt{-1}\theta}kv^*.
		\]
		With the real Hilbert--Schmidt pairing
		$\langle X,Y\rangle_{\R}=\operatorname{Re}\tr(XY^*)$,
		one has
		\[
		\operatorname{Re}(e^{-\sqrt{-1}\theta}v^*w)
		=\operatorname{Re}\tr(CQ_\theta^*).
		\]
		Since $\tr C=0$, the scalar part of $Q_\theta$ does not contribute.
		Thus, with
		\[
		Q_{\theta,0}=Q_\theta-\frac{\tr Q_\theta}{n}I,
		\]
		we obtain
		\[
		\operatorname{Re}(e^{-\sqrt{-1}\theta}v^*w)
		\le\|C\|_{\mathrm{HS}}\|Q_{\theta,0}\|_{\mathrm{HS}}.
		\]
		It remains to estimate the last norm.
		Put $r=|e^*v|$ and $s=|v^*k|$.
		Since $0\le r,s\le1$,
		\[	\|Q_{\theta,0}\|_{\mathrm{HS}}^2
			=\|Q_\theta\|_{\mathrm{HS}}^2
			-\frac{|\tr Q_\theta|^2}{n}\le
			c^2+1+
			\frac{-c^2r^2-s^2+2c(n-1)rs}{n}.
		\]
		The difference between $c^2+n-1$ and the right-hand side is
		\[
		\frac{(cr-(n-1)s)^2+n(n-2)(1-s^2)}{n}\ge0.
		\]
		Since
		\[
		|w|
		=\sup_{|v|=1,\,\theta\in\R}
		\operatorname{Re}(e^{-\sqrt{-1}\theta}v^*w),
		\]
		taking the supremum over $v$ and $\theta$ proves \eqref{e2_18}.
	\end{proof}
	
	We now prove Theorem~\ref{thm:general-interior}.
	In the cylindrical case all functions and coefficients are invariant
	under translations in the fixed subspace, and compactness below is
	understood on the quotient.
	
	\begin{proof}[Proof of Theorem~\ref{thm:general-interior}]
		Set
		\[
		K=\|\partial u\|_{L^\infty(\Omega)},\qquad
		U=\|u\|_{L^\infty(\Omega)}.
		\]
		We use $A$, $B$, $L$, and $\mathbb B^{(p)}$ as defined above.
		Strict plurisubharmonicity and the maximum principle give $u<0$
		in $\Omega$.
		In the cylindrical case, a maximum is attained on the compact quotient
		and the same principle applies at a lift to $\Omega$.
		Set
		\[
		a=\tr A,\qquad d=\tr(B\overline A^{-1}\overline B).
		\]
		The assumed matrix inequality gives $0\le d\le\kappa a$.
		The differentiated identities used below require only $A>0$ and
		$\det A=1$; they do not require real convexity.
		
		Taking the trace in \eqref{e2_2} and \eqref{e2_9}, set
		\[
		\begin{aligned}
			E=u^{\bar q p}
			\tr\bigl((\partial_p A)A^{-1}(\partial_{\bar q}A)\bigr),\qquad
			F=u^{\bar q p}
			\tr\bigl(\mathbb B^{(p)}
			\overline A^{-1}(\mathbb B^{(q)})^*\bigr).
		\end{aligned}
		\]
		Both quantities are nonnegative and
		\begin{equation}\label{e2_19}
			La=E,\qquad Ld=E+F.
		\end{equation}
		For $c\ge0$, set $S_c=ca+d$.
		Then
		\begin{equation}\label{e2_20}
			LS_c=(c+1)E+F.
		\end{equation}
		
		We next estimate $\partial S_c$.
		Differentiating $d=\tr(B\overline A^{-1}\overline B)$ and
		using \eqref{e2_7} gives
		\begin{equation}\label{e2_21}
			d_i
			=\tr(\mathbb B^{(i)}\overline A^{-1}\overline B)
			+\tr(B\overline A^{-1}\partial_i\overline B).
		\end{equation}
		The first term satisfies
		\begin{equation}\label{e2_22}
			u^{\bar j i}
			\tr(\mathbb B^{(i)}\overline A^{-1}\overline B)
			\overline{
				\tr(\mathbb B^{(j)}\overline A^{-1}\overline B)}
			\le dF.
		\end{equation}
		Indeed, after diagonalizing $A$, this is the Hilbert--Schmidt
		Cauchy--Schwarz inequality applied to
		$\mathbb B^{(i)}\overline A^{-1/2}$ and
		$B\overline A^{-1/2}$, followed by summation with the weights
		from $A^{-1}$.
		
		For the remaining term, diagonalize
		$A=\operatorname{diag}(\lambda_1,\ldots,\lambda_n)$ at the
		point under consideration and define, only for this calculation,
		\[
		H_{piq}
		=\frac{u_{pi\bar q}}{\sqrt{\lambda_p\lambda_i\lambda_q}},
		\qquad
		C^{(p)}_{iq}=H_{piq}.
		\]
		Differentiating $\log\det A=0$ in the $p$ direction gives
		$\tr C^{(p)}=0$.
		Moreover, the symmetry in the first two holomorphic indices gives
		\begin{equation}\label{e2_23}
			E=\sum_p\lambda_p\|C^{(p)}\|_{\mathrm{HS}}^2.
		\end{equation}
		By the matrix bound,
		\[
		A^{-1/2}B\overline A^{-1}\overline B A^{-1/2}
		\le\kappa I.
		\]
		Thus, if $e_p$ is the $p$-th coordinate vector and $k^{(p)}$
		is the transpose of the $p$-th row of
		$A^{-1/2}B\overline A^{-1/2}$, then
		$|k^{(p)}|\le\sqrt\kappa$.
		Direct substitution gives
		\begin{equation}\label{e2_24}
			\begin{aligned}
				&A^{-1/2}
				\Bigl(
				c\,\partial a+
				\bigl(\tr(B\overline A^{-1}\partial_i\overline B)\bigr)_i
				\Bigr)
				=\sum_p\lambda_p
				\bigl(cC^{(p)}e_p+(C^{(p)})^*k^{(p)}\bigr).
			\end{aligned}
		\end{equation}
		Componentwise,
		\[
		\begin{aligned}
			\frac{a_i}{\sqrt{\lambda_i}}
			&=\sum_p\lambda_p(C^{(p)}e_p)_i,\quad 
			\frac{\tr(B\overline A^{-1}\partial_i\overline B)}
			{\sqrt{\lambda_i}}
			&=\sum_p\lambda_p
			\bigl((C^{(p)})^*k^{(p)}\bigr)_i.
		\end{aligned}
		\]
		Applying Lemma~\ref{lem:tracefree} with $c/\sqrt\kappa$
		and $k^{(p)}/\sqrt\kappa$, and multiplying by $\kappa$, gives
		\[
		|cC^{(p)}e_p+(C^{(p)})^*k^{(p)}|^2
		\le
		\bigl(c^2+(n-1)\kappa\bigr)
		\|C^{(p)}\|_{\mathrm{HS}}^2.
		\]
		Weighted Cauchy--Schwarz and \eqref{e2_23} therefore give
		\begin{equation}\label{e2_25}
			\left|
			c\,\partial a+
			\bigl(\tr(B\overline A^{-1}\partial_i\overline B)\bigr)_i
			\right|_A^2
			\le
			\bigl(c^2+(n-1)\kappa\bigr)aE.
		\end{equation}
		Combining \eqref{e2_21}, \eqref{e2_22}, and \eqref{e2_25},
		we obtain
		\[
		|\partial S_c|_A
		\le
		\sqrt{\bigl(c^2+(n-1)\kappa\bigr)aE}
		+\sqrt{dF}.
		\]
		Hence
		\begin{equation}\label{e2_26}
			|\partial S_c|_A^2
			\le
			\left(
			\frac{c^2+(n-1)\kappa}{c+1}a+d
			\right)
			\bigl((c+1)E+F\bigr).
		\end{equation}
		Assume $c>(n-1)\kappa$.
		The function
		\[
		t\longmapsto
		\frac{(c^2+(n-1)\kappa)/(c+1)+t}{c+t}
		\]
		is increasing on $[0,\kappa]$, since its derivative is
		$\frac{c-(n-1)\kappa}{(c+1)(c+t)^2}>0$.
		Since $0\le d/a\le\kappa$, \eqref{e2_20} therefore gives
		\[
		|\partial S_c|_A^2
		\le
		\frac{c^2+c\kappa+n\kappa}{(c+1)(c+\kappa)}
		S_cLS_c.
		\]
		It follows that
		\begin{equation}\label{e2_27}
			L\log S_c
			\ge
			\frac{c-(n-1)\kappa}{c^2+c\kappa+n\kappa}
			|\partial\log S_c|_A^2.
		\end{equation}
		Choose
		\[
		c=(2n-1)\kappa,\qquad
		\beta=2c+2=(4n-2)\kappa+2,\qquad
		S=ca+d.
		\]
		Then \eqref{e2_27} becomes
		\begin{equation}\label{e2_28}
			L\log S
			\ge\frac1{\beta-1}|\partial\log S|_A^2.
		\end{equation}
		
		Set $G=|\partial u|^2$.
		Since $Lu=n$,
		\begin{equation}\label{e2_29}
			L\log(-u)
			=-\frac n{-u}
			-\frac{|\partial u|_A^2}{(-u)^2}.
		\end{equation}
		A direct differentiation, using \eqref{e2_1}, gives
		\begin{equation}\label{e2_30}
			LG=a+d,\qquad
			\partial G=A\,\partial u+B\,\overline{\partial u}.
		\end{equation}
		Since $|\partial u|\le K$,
		\begin{equation}\label{e2_31}
			|\partial G|_A^2\le4K^2(a+d).
		\end{equation}
		Indeed,
		\[
		|A\partial u|_A^2\le K^2a,\qquad
		|B\overline{\partial u}|_A^2\le K^2d.
		\]
		Hence $|x+y|^2\le2|x|^2+2|y|^2$ gives
		$|\partial G|_A^2\le2K^2(a+d)$, which implies \eqref{e2_31}.
		
		Let $\alpha=\frac1{8(1+K^2)}$ and consider
		\[
		\Psi=(-u)^\beta Se^{\alpha G}.
		\]
		It vanishes on the boundary and is invariant along the fixed
		subspace in the cylindrical case.
		By continuity, $A\ge0$ on the boundary, and $\det A=1$ implies
		$A>0$ there.
		Since $u\in C^2(\overline\Omega)$, compactness of
		$\overline\Omega$, or of its quotient, bounds
		$A$, $A^{-1}$, $B$, $S$, and $G$.
		Thus $\Psi$ attains a positive interior maximum $x_0$, either
		in $\Omega$ or on the compact quotient.
		A lift of a quotient maximum is a local maximum in $\Omega$,
		since $\Psi$ is translation invariant.
		At $x_0$,
		\[
		\partial\log S
		=\beta\frac{\partial u}{-u}-\alpha\partial G.
		\]
		Using \eqref{e2_28}, \eqref{e2_29}, and \eqref{e2_30},
		we obtain
		\begin{align*}
			0&\ge L\log\Psi\\
			&\ge
			-\frac{n\beta}{-u}
			-\beta\left|\frac{\partial u}{-u}\right|_A^2
			+\frac1{\beta-1}
			\left|
			\beta\frac{\partial u}{-u}-\alpha\partial G
			\right|_A^2
			+\alpha(a+d).
		\end{align*}
		For arbitrary covectors $X,Y$,
		\[
		\frac1{\beta-1}|\beta X-Y|_A^2-\beta|X|_A^2
		=
		\frac{\beta}{\beta-1}|X-Y|_A^2-|Y|_A^2
		\ge-|Y|_A^2.
		\]
		Therefore \eqref{e2_31} gives
		\[
		0\ge
		-\frac{n\beta}{-u}
		+\alpha(1-4\alpha K^2)(a+d).
		\]
		Since $4\alpha K^2\le1/2$,
		\[
		(-u)(a+d)
		\le16n\beta(1+K^2)
		\qquad\text{at }x_0.
		\]
		As $c\ge1$, we have $ca\le S\le c(a+d)$.
		Using $-u\le U$ and $\alpha G\le1/8$, we obtain
		\[
		\Psi(x_0)
		\le16n\beta c\,e^{1/8}(1+K^2)U^{\beta-1}.
		\]
		By maximality, for every $x\in\Omega$,
		\begin{equation}\label{e2_32}
			S(x)
			\le
			16n\beta c\,e^{1/8}(1+K^2)
			U^{\beta-1}(-u(x))^{-\beta}.
		\end{equation}
		
		Finally, positivity of $A$ gives
		$|A_{ij}|\le\sqrt{A_{ii}A_{jj}}$.
		By the Cauchy--Schwarz inequality and the assumed matrix inequality,
		\[
		|B_{ij}|^2
		\le\bigl(B\overline A^{-1}\overline B\bigr)_{ii}A_{jj}
		\le\kappa A_{ii}A_{jj}.
		\]
		The relation between the real and complex Hessians therefore yields
		\[
		\begin{aligned}
			|D_{\R}^2u|
			&\le2\max_{i,j}\bigl(|A_{ij}|+|B_{ij}|\bigr)\\
			&\le2(1+\sqrt\kappa)a
			\le\frac{2(1+\sqrt\kappa)}cS.
		\end{aligned}
		\]
		Combining this with \eqref{e2_32} proves \eqref{e1_general}.
	\end{proof}
	
	\begin{proof}[Proof of Theorem~\ref{thm:interior}]
		Real convexity gives $A\ge0$, and $\det A=1$ then gives $A>0$.
		By Lemma~\ref{lem:realcomplex},
		\[
		B\overline A^{-1}\overline B\le A.
		\]
		Apply Theorem~\ref{thm:general-interior} with $\kappa=1$.
		Then $\beta=4n$ and
		$32n\beta(1+\sqrt\kappa)=256n^2$, proving \eqref{e1_2}.
	\end{proof}
	
	\section{Estimates on convex sections and rescaling}
	\label{sec:sections}
	
	We now apply Theorem~\ref{thm:interior} to normalized convex sections.
	The normalization provides the gradient bound required in
	\eqref{e1_2} without controlling the Euclidean diameter of the section.
	
	\begin{lemma}\label{lem:section-gradient}
		Let $\Omega\subset\mathbb C^n$ be a smooth convex domain and let
		$v\in C^2(\overline\Omega,\mathbb R)$ satisfy
		\[
		\begin{gathered}
			-1\le v<0\quad\text{in }\Omega,\qquad
			v=0\quad\text{on }\partial\Omega,\\
			D_{\mathbb R}^2v\ge0,\qquad
			\det(v_{i\bar j})=1.
		\end{gathered}
		\]
		Assume that there is a real convex quadratic function $q$ such that
		\[
		0\le q\le4n^2\quad\text{on }\overline\Omega,\qquad
		I\le(q_{i\bar j})\le4n^2I.
		\]
		Assume that either $\Omega$ is bounded, or $\Omega$, $v$, and $q$
		are invariant under translations along the same fixed real subspace
		and the quotient of $\overline\Omega$ by this subspace is compact.
		Then there is a constant $c_n>0$ such that
		\begin{equation}\label{e3_1}
			\operatorname{dist}_{\mathbb R}
			\bigl(\{v\le-1/4\},\partial\Omega\bigr)\ge c_n
		\end{equation}
		and
		\begin{equation}\label{e3_2}
			\sup_{\{v\le-1/4\}}|\partial v|\le c_n^{-1}.
		\end{equation}
	\end{lemma}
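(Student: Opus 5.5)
The plan is to prove the distance bound \eqref{e3_1} first, by a comparison (barrier) argument that uses the quadratic $q$, and then to deduce the gradient bound \eqref{e3_2} from it by convexity alone.

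\textbf{Step 1: the gradient bound follows from the distance bound.} Fix $x\in\{v\le-1/4\}$ and a unit real direction $e$ along which the directional derivative $\partial_ev(x)$ is maximal, so that $\partial_ev(x)=|D_{\R}v(x)|$. The ray $x+te$ leaves $\Omega$ at some $y$ with $|y-x|\ge\operatorname{dist}_{\R}(x,\partial\Omega)$. In the cylindrical case this still holds, because $v$ is constant along the invariant subspace, so $e$ may be taken orthogonal to it and the quotient is compact. Convexity and $v(y)=0$ give
\[
0\ge v(x)+|D_{\R}v(x)|\,|y-x|.
\]
Hence $|D_{\R}v(x)|\le |v(x)|/\operatorname{dist}_{\R}(x,\partial\Omega)\le1/c_n$. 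Since $|\partial v|\le|D_{\R}v|$, \eqref{e3_2} follows once \eqref{e3_1} is known, after shrinking $c_n$ if needed.

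\textbf{Step 2: reduce the distance bound to a linear growth estimate at the boundary.} For \eqref{e3_1}, fix $y\in\partial\Omega$ and a supporting real half-space $\{\ell\ge0\}\supset\Omega$ with $\ell(y)=0$ and $|D\ell|=1$. It suffices to show $v\ge -C_n\ell$ on $\Omega$. Then $v(x)\le-1/4$ forces $\ell(x)\ge1/(4C_n)$ for every supporting $\ell$, which gives the distance bound.

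\textbf{Step 3: the normalized quadratic.} To build a subsolution I replace $q$ by its tangent-subtracted version
\[
Q_y=q-q(y)-\nabla q(y)\cdot(\cdot-y).
\]
It has the same complex Hessian, so $\det(Q_{y,i\bar j})\ge1$, and it satisfies $Q_y\ge0$ and $Q_y(y)=0$. Restricting $q$ to the segment from $y$ to $x\in\Omega$, which lies in $\overline\Omega$ by convexity, it is a convex quadratic in $t\in[0,1]$ with values in $[0,4n^2]$. Writing its quadratic coefficient as $h=Q_y(x)$, one gets
\[
h=2\bigl(q(y)+q(x)-2q(\tfrac{x+y}{2})\bigr)\le16n^2.
\]
So $0\le Q_y\le16n^2$ on $\Omega$. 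The upper bound $(q_{i\bar j})\le4n^2I$ will be used to control the real Hessian of $Q_y$, hence its size on small balls around $y$.

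\textbf{Step 4: the barrier (main obstacle).} The natural candidate is
\[
w=Q_y-M-N\ell \quad\text{on } D_h=\Omega\cap\{\ell<h\}.
\]
On $\{\ell=h\}$ one has $w\le v$ provided $Nh\ge1+16n^2$, using $v\ge-1$. Since $\ell$ is pluriharmonic, $\det(w_{i\bar j})\ge1=\det(v_{i\bar j})$, and the complex comparison principle gives $w\le v$ in $D_h$. In the cylindrical case the comparison principle is applied on the compact quotient.

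The difficulty is the part $\partial\Omega\cap\{\ell<h\}$, where one needs $Q_y-M\le N\ell$ while still wanting $w(y)=-M$ to be small. A flat boundary piece, on which $\ell$ vanishes identically, is exactly where this can fail. I would first try to exploit that $Q_y$ is comparable, via $I\le(q_{i\bar j})\le4n^2I$, to $|x-y|^2$ in at least $n$ complex-independent directions. The aim is to choose $M$ small, $h\sim c_n$ and $N\sim C_n$ so that $\{Q_y\ge M\}\cap\partial\Omega$ is contained in $\{\ell\ge M/N\}$.

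If this purely local barrier does not close, the fallback is an Aleksandrov-type contradiction argument. If $v(x_0)\le-1/4$ with $x_0$ very close to $\partial\Omega$, convexity forces $|D_{\R}v|\ge 1/(8\delta)$ on a large portion of $\{v\le-1/8\}$. Comparing the Monge--Amp\`ere mass $\int\det(v_{i\bar j})$ with that of $q$ on a section, using $0\le q\le4n^2$, then yields a contradiction. I expect this boundary barrier step to be the genuine obstacle; Steps 1 and 3 are routine.
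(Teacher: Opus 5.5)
\textbf{Gap: the distance bound \eqref{e3_1} is not proved.} Steps 1 and 3 are fine. Step 1 is essentially the paper's deduction of \eqref{e3_2} from \eqref{e3_1}, and Step 3 is correct but not needed. The gap is \eqref{e3_1} itself, which is the substance of the lemma.

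\emph{Why your barrier fails.} $w=Q_y-M-N\ell$ uses $q$ at full strength. So on $\partial\Omega\cap\{\ell<h\}$ you need $Q_y\le M+N\ell$. That is a uniform convexity of $\partial\Omega$ relative to $q$, and the hypotheses do not provide it.

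\emph{Why the fallback does not work as sketched.} There is no Euclidean diameter or volume bound for $\Omega$. For example, $q=2|\operatorname{Re}z-x_0|^2$ has $(q_{i\bar j})=I$, so $\Omega$ may be arbitrarily long in the $\operatorname{Im}z$ directions, or unbounded there in the cylindrical case. An Aleksandrov-type estimate then gives nothing uniform. Moreover, $\int\det(v_{i\bar j})$ is not a gradient-image measure.

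\emph{Step 2 asks for too much.} $v\ge-C_n\ell$ is a uniform boundary Lipschitz bound, and it fails for $n\ge3$. Take tube domains $D+\sqrt{-1}\R^n$ and $v=4f(\operatorname{Re}z)$ with $\det D^2f=1$ on $D\subset\R^n$ and $f=0$ on $\partial D$. These satisfy all hypotheses in the cylindrical case. Now let $D$ degenerate to a convex domain with a flat face. The limiting $f$ is not Lipschitz at the face: a thin box $B'_r\times(0,t)$ there has Monge--Amp\`ere measure of order $t$. A Lipschitz bound would confine its gradient image to a set of measure $O(t^{n-1})$. What you actually need, and what is true, is $v(x)\ge-M-N\ell(x)$ with a fixed $M<1/4$, at the specific point $x$.

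\textbf{The missing idea.} Shrink $q$ instead of using it fully, and restore the determinant with a function of $\ell$ alone.

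Fix $x\in\{v\le-1/4\}$, let $y$ be a nearest boundary point, and set $d=|x-y|$. Let $\ell$ be the supporting affine function with unit normal $(x-y)/d$, so $\ell\ge0$ on $\overline\Omega$ and $|\partial\ell|^2=1/4$. With $\varepsilon=1/(32n^2)$ and $\delta^2=\varepsilon^{n-1}/2$, set
\[
b=\varepsilon(q-4n^2)-\frac{2\ell}{\delta}+\frac{\ell^2}{\delta^2}.
\]

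\emph{The determinant.} Since $\ell$ is real affine, $\ell^2$ has rank-one complex Hessian $2\ell_i\ell_{\bar j}$. Hence
\[
(b_{i\bar j})\ge\varepsilon I+\frac{2}{\delta^2}\ell_i\ell_{\bar j},
\]
and the rank-one determinant formula gives $\det(b_{i\bar j})\ge\varepsilon^n+\varepsilon^{n-1}/(2\delta^2)=\varepsilon^n+1>1$.

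\emph{The boundary values.} On $\overline\Omega$ the term $\varepsilon(q-4n^2)$ lies in $[-1/8,0]$. The term $-2\ell/\delta+\ell^2/\delta^2$ is $\le0$ for $0\le\ell\le\delta$ and equals $-1$ at $\ell=\delta$. Therefore $b\le0=v$ on $\partial\Omega\cap\{\ell\le\delta\}$, and $b\le-1\le v$ on $\{\ell=\delta\}$.

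\emph{Conclusion.} Comparison on $\Omega\cap\{0<\ell<\delta\}$ gives $b\le v$. In the cylindrical case $\ell$ is invariant and you argue on the quotient. If $d<\delta$, then
\[
-\tfrac14\ge v(x)\ge b(x)\ge-\tfrac18-\tfrac{2d}{\delta},
\]
so $d\ge\delta/16$. This is exactly where a flat boundary piece stops mattering: the boundary inequality is automatic because the quadratic has been scaled down. The lost determinant is recovered in a single complex direction, which is all the rank-one term needs to supply.
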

	
	\begin{proof}
		Since $D_{\mathbb R}^2v\ge0$ and $\det(v_{i\bar j})=1$,
		one has $(v_{i\bar j})>0$.
		Set
		\[
		\varepsilon=\frac1{32n^2},\qquad
		\delta^2=\frac{\varepsilon^{n-1}}2.
		\]
		Fix $x\in\{v\le-1/4\}$ and a nearest boundary point $y$.
		Put $d=|x-y|$ and $\nu=(x-y)/d$.
		By convexity, the affine function
		\[
		l(z)=\nu\cdot(z-y)
		\]
		satisfies $l\ge0$ on $\overline\Omega$, $l(x)=d$,
		and $|\partial l|^2=1/4$.
		In the cylindrical case, the supporting normal $\nu$ is orthogonal
		to the invariant subspace, so $l$ has the same translation
		invariance as $v$ and $q$.
		
		On $\Omega\cap\{0<l<\delta\}$ consider
		\[
		b=\varepsilon(q-4n^2)
		-\frac{2l}{\delta}
		+\frac{l^2}{\delta^2}.
		\]
		Since $l$ is affine,
		\[
		(b_{i\bar j})
		=\varepsilon(q_{i\bar j})
		+\frac2{\delta^2}l_i l_{\bar j}
		\ge
		\varepsilon I+\frac2{\delta^2}l_i l_{\bar j}.
		\]
		The rank-one determinant formula and $|\partial l|^2=1/4$ give
		\[
		\det(b_{i\bar j})
		\ge
		\varepsilon^n+\frac{\varepsilon^{n-1}}{2\delta^2}
		=\varepsilon^n+1>1.
		\]
		On $\partial\Omega$ in the strip, $b\le0=v$, while on
		$l=\delta$, $b\le-1\le v$.
		Thus the comparison principle gives $b\le v$.
		In the cylindrical case, this follows by lifting a maximum of
		$b-v$ from the compact quotient: a positive interior maximum would
		give $(b_{i\bar j})\le(v_{i\bar j})$, contradicting their determinants.
		If $d<\delta$, then
		\[
		v(x)\ge b(x)\ge-\frac18-\frac{2d}{\delta}.
		\]
		Since $v(x)\le-1/4$, it follows that $d\ge\delta/16$.
		The same bound is immediate if $d\ge\delta$.
		This proves \eqref{e3_1} with $c_n=\delta/16$.
		
		If $v(x)\le-1/4$, then $B_{c_n/2}(x)\subset\Omega$.
		For every real unit vector $e$, convexity gives
		\[
		D_{\mathbb R}v(x)\cdot e
		\le
		\frac{v(x+(c_n/2)e)-v(x)}{c_n/2}
		\le\frac2{c_n}.
		\]
		Applying the same inequality to $-e$ gives
		$|D_{\mathbb R}v(x)|\le2c_n^{-1}$, and therefore
		$|\partial v(x)|\le c_n^{-1}$.
		This proves \eqref{e3_2}.
	\end{proof}
	
	\begin{proposition}\label{prop:globalbound}
		Let $n\ge2$ and let $u\in C^\infty(\mathbb C^n,\mathbb R)$ satisfy
		\[
		D_{\mathbb R}^2u\ge0,\qquad
		\det(u_{i\bar j})=1\quad\text{on }\mathbb C^n.
		\]
		Then $\|D_{\mathbb R}^2u\|_{L^\infty(\mathbb C^n)}<\infty$.
	\end{proposition}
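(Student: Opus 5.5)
Fix an arbitrary point $z_0\in\C^n$; the goal is a bound on $D_{\R}^2u(z_0)$ that depends only on $n$. By the splitting argument following Lemma~\ref{lem:rank}, after subtracting an affine function and translating we may assume $u(z_0)=0$, $Du(z_0)=0$, $u\ge0$, and $u$ invariant along the fixed subspace $\mathcal K$ from \eqref{e2_16}. In the quotient variables $u$ is strictly convex. For $h>0$ consider the section
\[
S_h=\{u<h\}.
\]
We will show that every section is bounded modulo $\mathcal K$. Strict convexity in the quotient, together with $u\ge0$, gives superlinear growth modulo $\mathcal K$. Hence $\overline{S_h}/\mathcal K$ is compact, which is exactly the cylindrical hypothesis of Theorem~\ref{thm:interior} and Lemma~\ref{lem:section-gradient}. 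I will use a fixed height, say $h=1$. If $S_1$ is not smooth, I will replace it by $\{u<1\}$ for a regular value near $1$, or approximate from inside.

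\textbf{Step 1: complex-affine normalization.} I want a complex-linear map $T$ (so that $\det(u_{i\bar j})=1$ is preserved up to the factor $|\det T|^2$) and a rescaling such that
\[
v(w)=\frac{u(z_0+Tw)-h}{h}
\]
satisfies the hypotheses of Lemma~\ref{lem:section-gradient}. Concretely, we need $-1\le v<0$ on the normalized section, $\det(v_{i\bar j})=1$, and a comparison quadratic $q$ with $0\le q\le4n^2$ and $I\le(q_{i\bar j})\le4n^2I$. The natural choice is a John-ellipsoid normalization of the convex set $S_h/\mathcal K$. However, a John ellipsoid is real-affine, while only complex-affine maps preserve the equation.

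My plan is therefore to use the Hermitian form $A(z_0)$, or rather the comparison quadratic itself, to normalize. I would take $q$ to be the real quadratic whose complex Hessian is controlled by the John ellipsoid of $S_h$ after symmetrization $E\cap\sqrt{-1}E$-type arguments. The determinant constraint then ties the volume of the section to $h^n$ via Aleksandrov-type comparison: $u$ is squeezed between quadratics $b$ built as in the proof of Lemma~\ref{lem:section-gradient}. The comparison principle with $\det(b_{i\bar j})\gtrless1$ gives two-sided control of the normalized section by $q$.

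I expect this normalization to be the main obstacle. Real convexity does not make a real ellipsoid complex-equivalent to a ball, since the complex structure can be skewed relative to the John ellipsoid. I would first try to choose $q$ with $q_{i\bar j}$ comparable to $I$ directly from comparison: the upper barrier $\frac{1}{4n^2}q$ and the lower barrier come from the equation. I would only then read off the shape, rather than controlling the Euclidean diameter, as the remark before Lemma~\ref{lem:section-gradient} suggests.

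\textbf{Step 2: interior estimate on the normalized section.} On the normalized section, Lemma~\ref{lem:section-gradient} gives $|\partial v|\le c_n^{-1}$ on $\{v\le-1/4\}$. I would apply Theorem~\ref{thm:interior} on the convex subdomain $\Omega'=\{v<-1/4\}$ to the function $v+1/4$, which vanishes on $\partial\Omega'$ and satisfies $\|v+\tfrac14\|_\infty\le 1$. At the normalized image of $z_0$ we have $v=-1$, so $-(v+1/4)=3/4$ there. Theorem~\ref{thm:interior} then yields $|D_{\R}^2v|\le C(n)$ at that point.

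\textbf{Step 3: return to $u$.} Undoing the normalization, $D_{\R}^2u(z_0)=T^{-*}D_{\R}^2v\,T^{-1}$ up to the factor $h$. It therefore remains to bound $\|T^{-1}\|$ by a constant independent of $z_0$. With $h=1$ fixed, this bound would follow if the sections $\{u<u(z_0)+Du(z_0)\cdot(z-z_0)+1\}$ contain a Euclidean ball of radius $r_n$ modulo $\mathcal K$. I plan to obtain that ball from the upper comparison $u\le$ (quadratic with controlled complex Hessian), using the comparison principle with the barrier $b$ and the determinant equation. If this uniform inner radius fails, the fallback is to let $h\to\infty$ and use the scale-invariant form of the estimate, deducing boundedness from the resulting bound on $h\,T^{-*}T^{-1}$.
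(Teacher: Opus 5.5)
There is a genuine gap, and it lies in the globalization (your Step~3), not only in the normalization you flagged.

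\textbf{The globalization.} Your stated goal, a bound on $D_{\R}^2u(z_0)$ depending only on $n$, is false. The function $u=\lambda|z_1|^2+\lambda^{-1}|z_2|^2+\sum_{k\ge3}|z_k|^2$ satisfies all hypotheses and has $|D_{\R}^2u|\ge2\lambda$. Its height-one sections have inner radius of order $\lambda^{-1/2}$, so no radius $r_n$ exists.

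More fundamentally, a uniform inner radius for the sections at all points is essentially equivalent to the conclusion. A global Hessian bound gives it at once, and with Theorem~\ref{thm:interior} it gives the Hessian bound back. The comparison principle cannot supply it: it only fixes $\det((q_h)_{i\bar j})$ relative to $h$ and says nothing about the eccentricity of the John ellipsoid.

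The missing idea is to work from one fixed base point. Normalize there by a unimodular complex-linear map so that $u(0)=0$, $D_{\R}u(0)=0$, $A(0)=I$, and take the sections $S_h$ centered there with $h\to\infty$. With $z=a_h+\sqrt h\,G_hw$ and $v_h=u/h-1$, the scaling cancels and $((v_h)_{i\bar j})(w)=G_h^TA(z)\overline{G_h}$. At the image of the origin this is $G_h^T\overline{G_h}$, which Theorem~\ref{thm:interior} bounds. So the singular values of $G_h$ are bounded above, and because $|\det G_h|=1$ they are also bounded below. The same identity then gives $A\le C_nI$ on $S_{h/2}$. These sets exhaust $\C^n$, and $|D_{\R}^2u|\le4\tr A$. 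Your fallback mentions $h\to\infty$, but it does not identify that the normalization is controlled by the fixed value $A(0)=I$ together with $|\det G_h|=1$.

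\textbf{The normalization.} Step~1 is also left open, but the obstacle you describe is not real. Lemma~\ref{lem:section-gradient} never needs the section to be complex-equivalent to a ball. It needs only $0\le q\le4n^2$ and $I\le(q_{i\bar j})\le4n^2I$, and its barrier does not see the real shape of $q$.

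So take the real John quadratic $q_h$, with $\{q_h<1\}\subset S_h\subset\{q_h<4n^2\}$, and set $C_h=((q_h)_{i\bar j})$. This matrix is positive definite even if $\mathcal K\ne\{0\}$. Indeed, $D_{\R}^2q_h$ and $D_{\R}^2u(0)$ share the kernel $\mathcal K$, so $D_{\R}^2q_h\ge\varepsilon D_{\R}^2u(0)$, hence $C_h\ge\varepsilon A(0)>0$. Put $c_h=(\det C_h)^{1/n}$ and $G_h=c_h^{1/2}\overline{C_h}^{-1/2}$; then $G_h^TC_h\overline{G_h}=c_hI$ and $|\det G_h|=1$.

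Both $(q_h-1)/c_h$ and $(q_h-4n^2)/c_h$ have complex Monge--Amp\`ere determinant one. Comparing $u-h$ with the first on $\{q_h<1\}$ and with the second on $S_h$ gives $1\le hc_h\le4n^2$. This is exactly the hypothesis of Lemma~\ref{lem:section-gradient} in the $w$-coordinates. Neither symmetrization nor $A(z_0)$, which is unrelated to the shape of the section, is needed.

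Two minor points. Strict convexity gives only linear growth, not superlinear, but linear growth suffices for bounded sections. Every $h>0$ is a regular value, so no approximation of $S_h$ is needed.
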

	
	\begin{proof}
		By Lemma~\ref{lem:rank} and the splitting argument in
		Section~\ref{sec:identities}, the real Hessian has the fixed nullspace
		$\mathcal K$ from \eqref{e2_16}.
		After subtracting the tangent affine function at the origin and
		making a constant complex-linear change of coordinates with
		determinant of modulus one, we may assume
		\begin{equation}\label{e3_3}
			u(0)=0,\qquad D_{\mathbb R}u(0)=0,\qquad A(0)=I.
		\end{equation}
		Indeed, $\det A(0)=1$, so the complex-linear change may be chosen
		with determinant of modulus one.
		We continue to denote the transformed fixed nullspace by
		$\mathcal K$.
		These operations preserve all hypotheses.
		Since $u$ is convex, \eqref{e3_3} gives $u\ge0$.
		Moreover,
		\[
		D_{\mathbb R}(\partial_eu)=D_{\mathbb R}^2u\,e=0
		\]
		shows that $\partial_eu$ is constant for $e\in\mathcal K$;
		the normalization $D_{\mathbb R}u(0)=0$ therefore gives
		$\partial_eu\equiv0$.
		Hence $u$ is invariant under translations in $\mathcal K$.
		
		Let $\mathcal V=\mathcal K^\perp$ and, for $h>0$, set
		\[
		S_h=\{u<h\}.
		\]
		Then
		\[
		S_h=(S_h\cap\mathcal V)+\mathcal K.
		\]
		The restriction of $D_{\mathbb R}^2u$ to $\mathcal V$ is positive
		definite, so $u|_{\mathcal V}$ is strictly convex.
		The compact unit sphere in $\mathcal V$ gives
		\[
		\min_{\substack{\xi\in\mathcal V\\|\xi|=1}}u(\xi)>0,
		\]
		and convexity yields linear growth along every ray outside the
		unit ball.
		Thus $S_h\cap\mathcal V$ is bounded.
		Since the only critical point of $u|_{\mathcal V}$ is its minimum
		at the origin, the level $u=h$ is regular for every $h>0$;
		hence $\partial S_h$ is smooth.
		
		Apply John's ellipsoid theorem~\cite{r18} in the real vector space
		$\mathcal V$ to the convex body $\overline{S_h\cap\mathcal V}$.
		Since $\dim_{\mathbb R}\mathcal V\le2n$, there are
		$a_h\in\mathcal V$ and a nonnegative real quadratic function $q_h$,
		with $q_h(a_h)=0$ and real Hessian kernel $\mathcal K$, such that
		\begin{equation}\label{e3_4}
			\{q_h<1\}\subset S_h\subset\{q_h<4n^2\}.
		\end{equation}
		Put
		\[
		C_h=((q_h)_{i\bar j}).
		\]
		The nonnegative symmetric matrices $D_{\mathbb R}^2q_h$ and
		$D_{\mathbb R}^2u(0)$ have the same kernel $\mathcal K$.
		Thus there exists $\varepsilon_h>0$ such that
		\[
		D_{\mathbb R}^2q_h\ge\varepsilon_hD_{\mathbb R}^2u(0).
		\]
		The corresponding inequality for the complex Hessians, together
		with \eqref{e3_3}, gives
		\[
		C_h\ge\varepsilon_h A(0)=\varepsilon_h I>0.
		\]
		
		Set $c_h=(\det C_h)^{1/n}$.
		On $\{q_h<1\}$, both $u-h$ and $(q_h-1)/c_h$ have complex
		Monge--Amp\`ere determinant one, and
		$u-h\le0=(q_h-1)/c_h$ on the boundary.
		The comparison principle gives
		\[
		u-h\le\frac{q_h-1}{c_h}.
		\]
		Similarly, on $S_h$,
		\[
		\frac{q_h-4n^2}{c_h}\le u-h.
		\]
		In the translation-invariant case, the difference of the two
		functions descends to the compact quotient and attains its extrema
		there.
		Any lift of a quotient extremum is a local extremum in the ambient
		domain, since the difference is constant along the invariant
		directions.
		The usual maximum principle may therefore be applied after lifting.
		Indeed, in the ambient domain the difference satisfies the uniformly
		elliptic linear equation obtained by integrating the linearization
		of $\log\det$ along the segment joining the two complex Hessians.
		
		Evaluating the first inequality at $a_h$ and using $u(a_h)\ge0$
		gives $hc_h\ge1$.
		Evaluating the second at the origin gives
		\[
		hc_h\le4n^2-q_h(0)\le4n^2.
		\]
		Hence
		\begin{equation}\label{e3_5}
			1\le hc_h\le4n^2.
		\end{equation}
		Since $C_h>0$, set
		\[
		G_h=c_h^{1/2}\overline{C_h}^{-1/2}.
		\]
		Then
		\begin{equation}\label{e3_6}
			G_h^TC_h\overline{G_h}=c_h I.
		\end{equation}
		Taking determinants in \eqref{e3_6} gives $|\det G_h|=1$.
		Introduce the affine coordinates
		\[
		z=a_h+\sqrt h\,G_hw
		\]
		and set
		\[
		\begin{aligned}
			\Omega_h
			&=\{w:a_h+\sqrt h\,G_hw\in S_h\},\\
			v_h(w)
			&=\frac{u(a_h+\sqrt h\,G_hw)}h-1.
		\end{aligned}
		\]
		The chain rule gives
		\[
		((v_h)_{i\bar j})(w)
		=G_h^TA(a_h+\sqrt h\,G_hw)\overline{G_h},
		\]
		so
		\[
		\det((v_h)_{i\bar j})=1,\qquad
		-1\le v_h<0,\qquad
		v_h=0\quad\text{on }\partial\Omega_h.
		\]
		The domain $\Omega_h$ is smooth and convex.
		If $\mathcal K\ne\{0\}$, then $\Omega_h$, $v_h$, and
		$w\mapsto q_h(a_h+\sqrt h\,G_hw)$ are invariant under translations
		along the transformed fixed nullspace, and the quotient closure
		is compact.
		Moreover, \eqref{e3_4}, \eqref{e3_6}, and \eqref{e3_5} give
		\[
		0\le q_h(a_h+\sqrt h\,G_hw)\le4n^2
		\quad\text{on }\overline{\Omega_h}
		\]
		and
		\[
		\begin{gathered}
			\Bigl(q_h(a_h+\sqrt h\,G_hw)\Bigr)_{i\bar j}
			=hG_h^TC_h\overline{G_h}=hc_hI,\\
			I\le hc_hI\le4n^2I.
		\end{gathered}
		\]
		Here and below, $C_n>0$ denotes a constant depending only on $n$,
		whose value may change from line to line.
		Lemma~\ref{lem:section-gradient} therefore yields
		\begin{equation}\label{e3_7}
			\sup_{\{v_h\le-1/4\}}|\partial v_h|\le C_n.
		\end{equation}
		
		Apply Theorem~\ref{thm:interior} to $v_h+1/4$ on the convex
		sublevel set $\{v_h<-1/4\}$.
		On the quotient, $v_h$ is strictly convex and its only critical value
		is $-1$, so this sublevel set has smooth boundary.
		The $C^0$ norm of $v_h+1/4$ is $3/4$, its gradient is bounded by
		\eqref{e3_7}, and its boundary value is zero.
		If the fixed nullspace is nontrivial, the second alternative in
		Theorem~\ref{thm:interior} applies.
		Since $-(v_h+1/4)\ge1/4$ on $\{v_h\le-1/2\}$, we obtain
		\begin{equation}\label{e3_8}
			\|D_{\mathbb R}^2v_h\|_{L^\infty(\{v_h\le-1/2\})}\le C_n.
		\end{equation}
		
		At the point
		\[
		w=-h^{-1/2}G_h^{-1}a_h,
		\]
		which corresponds to the origin, \eqref{e3_3} gives $v_h(w)=-1$
		and
		\[
		((v_h)_{i\bar j})(w)=G_h^T\overline{G_h}.
		\]
		The real Hessian bound \eqref{e3_8} gives
		\[
		4\operatorname{tr}(G_h^T\overline{G_h})
		=\Delta_{\mathbb R}v_h(w)\le2nC_n.
		\]
		Since
		\[
		G_h^T\overline{G_h}
		=(\overline{G_h})^*\overline{G_h},
		\]
		its eigenvalues are the squares of the singular values of $G_h$.
		Thus all singular values of $G_h$ are bounded above by a dimensional
		constant.
		Since $|\det G_h|=1$, their product is one, and hence
		\begin{equation}\label{e3_9}
			\|G_h\|_{\mathrm{HS}}+\|G_h^{-1}\|_{\mathrm{HS}}\le C_n.
		\end{equation}
		
		Now let $z\in S_{h/2}$ and write
		$z=a_h+\sqrt h\,G_hw$.
		Then $v_h(w)\le-1/2$.
		Since
		\[
		((v_h)_{i\bar j})(w)=G_h^TA(z)\overline{G_h},
		\]
		\eqref{e3_8} gives
		\[
		4\operatorname{tr}
		\bigl(G_h^TA(z)\overline{G_h}\bigr)
		=\Delta_{\mathbb R}v_h(w)\le2nC_n.
		\]
		The matrix $G_h^TA(z)\overline{G_h}$ is positive Hermitian,
		so its eigenvalues are bounded above.
		Together with \eqref{e3_9}, this gives
		\[
		A(z)\le C_nI.
		\]
		As the sections $S_{h/2}$ exhaust $\mathbb C^n$, this gives a
		global upper bound for $A$.
		The identity $\det A=1$ then gives a global positive lower bound
		for $A$.
		Finally, real convexity gives
		$|D_{\mathbb R}^2u|\le\Delta_{\mathbb R}u=4\operatorname{tr}A$.
		Hence, in the normalized coordinates,
		\[
		\|D_{\mathbb R}^2u\|_{L^\infty(\mathbb C^n)}
		\le4\|\operatorname{tr}A\|_{L^\infty(\mathbb C^n)}\le C_n.
		\]
		Undoing the fixed normalization in \eqref{e3_3} proves the proposition.
	\end{proof}
	
	\section{Liouville theorem}\label{sec:liouville}
	
	\begin{proof}[Proof of Theorem~\ref{thm:liouville}]
		Proposition~\ref{prop:globalbound} bounds $D_{\mathbb R}^2u$ globally.
		Since $D_{\mathbb R}^2u\ge0$, one has $A=(u_{i\bar j})\ge0$,
		and $\det A=1$ gives $A>0$.
		Hence there are constants $0<\lambda\le\Lambda<\infty$ such that
		\begin{equation}\label{e4_1}
			\lambda I\le A\le\Lambda I
			\quad\text{on }\mathbb C^n.
		\end{equation}
		
		We now place the equation in the standard uniformly elliptic setting.
		For a real symmetric $2n\times2n$ matrix written in block form as
		\[
		H=\begin{pmatrix}U&V\\V^T&W\end{pmatrix},
		\]
		define
		\[
		P(H)
		=\frac14\bigl(U+W+\sqrt{-1}(V-V^T)\bigr).
		\]
		Thus $P(D_{\mathbb R}^2u)=A$.
		The real--complex identity used in the proof of
		Lemma~\ref{lem:realcomplex} shows that $P(N)\ge0$ whenever
		$N\ge0$, and direct tracing gives
		\[
		\operatorname{tr}P(N)=\frac14\operatorname{tr}N.
		\]
		
		Define
		\[
		\widetilde F(H)
		=
		\inf_{\lambda I\le C_0\le\Lambda I}
		\left\{
		\log\det C_0+
		\operatorname{tr}
		\bigl(C_0^{-1}(P(H)-C_0)\bigr)
		\right\},
		\]
		where the infimum is taken over positive Hermitian matrices $C_0$.
		The function $\widetilde F$ is concave as an infimum of affine
		functions.
		By concavity of $\log\det$, for every admissible $C_0$,
		\[
		\log\det A
		\le
		\log\det C_0+
		\operatorname{tr}\bigl(C_0^{-1}(A-C_0)\bigr).
		\]
		Since $A$ itself is admissible by \eqref{e4_1}, equality is attained
		at $C_0=A$.
		Therefore
		\[
		\widetilde F(D_{\mathbb R}^2u)
		=\log\det A=0.
		\]
		For every admissible $C_0$ and every $N\ge0$,
		\[
		\frac1{4\Lambda}\operatorname{tr}N
		\le
		\operatorname{tr}(C_0^{-1}P(N))
		\le
		\frac1{4\lambda}\operatorname{tr}N.
		\]
		Adding $N$ to the argument of each affine function in the definition
		of $\widetilde F$ and then taking the infimum gives
		\[
		\frac1{4\Lambda}\operatorname{tr}N
		\le
		\widetilde F(H+N)-\widetilde F(H)
		\le
		\frac1{4\lambda}\operatorname{tr}N.
		\]
		Thus $\widetilde F$ is uniformly elliptic on the full space of
		real symmetric matrices, with ellipticity constants depending only
		on $\lambda$ and $\Lambda$.
		
		After subtracting an affine function, assume
		$u(0)=0$ and $D_{\mathbb R}u(0)=0$.
		Let $B_r$ denote the Euclidean ball of radius $r$ centered at the
		origin.
		For $R>1$, set
		\[
		u_R(z)=R^{-2}u(Rz).
		\]
		Then
		\[
		D_{\mathbb R}^2u_R(z)=D_{\mathbb R}^2u(Rz),
		\qquad
		\widetilde F(D_{\mathbb R}^2u_R)=0.
		\]
		The global Hessian bound and the normalization at the origin give
		a uniform $C^{1,1}$ bound for $u_R$ on $B_2$.
		The Evans--Krylov interior estimate for concave uniformly elliptic
		equations~\cite{r1} therefore yields constants
		$\alpha\in(0,1)$ and $C_1<\infty$, independent of $R$, such that
		\[
		|D_{\mathbb R}^2u_R(\zeta)-D_{\mathbb R}^2u_R(\eta)|
		\le C_1|\zeta-\eta|^\alpha,
		\qquad \zeta,\eta\in B_1.
		\]
		For fixed $x,y\in\mathbb C^n$ and all sufficiently large $R$,
		$x/R,y/R\in B_1$, and hence
		\[
		|D_{\mathbb R}^2u(x)-D_{\mathbb R}^2u(y)|
		\le C_1R^{-\alpha}|x-y|^\alpha.
		\]
		Letting $R\to\infty$ shows that $D_{\mathbb R}^2u$ is constant
		on $\mathbb C^n$.
		Therefore $u$ is a real quadratic polynomial.
	\end{proof}
	
	\begin{proof}[Proof of Corollary~\ref{cor:outside}]
		The Hermitian matrix $(u_{i\bar j})$ is nonsingular everywhere
		because its determinant is one.
		On $\mathbb C^n\setminus K_0$, real convexity gives
		$(u_{i\bar j})\ge0$, hence $(u_{i\bar j})>0$.
		The number of negative eigenvalues of a continuous nonsingular
		Hermitian matrix is locally constant.
		Since $\mathbb C^n$ is connected, it follows that
		$(u_{i\bar j})>0$ on all of $\mathbb C^n$.
		
		Choose $R>0$ so large that $K_0\subset B_R$.
		Then $D_{\mathbb R}^2u\ge0$ on $\partial B_R$, and
		Proposition~\ref{prop:convexity} gives $D_{\mathbb R}^2u\ge0$
		on $B_R$.
		Together with the hypothesis outside $B_R$, this gives
		$D_{\mathbb R}^2u\ge0$ on $\mathbb C^n$.
		Theorem~\ref{thm:liouville} now implies that $u$ is a real
		quadratic polynomial.
	\end{proof}
	\section*{Acknowledgement}
	The authors would like to thank Professors Xinan Ma, Guohuan Qiu, Xushan Tu and Jiaxiang Wang for helpful discussions.
	In particular, they thank Jiaxiang Wang for suggesting the use of the
	$C^2$ interior estimate in the proof of the Liouville theorem, while previously the same theorem was proved using a $C^3$
	interior estimate.

\begin{flushleft}
Hongyu Chen\\
School of Mathematics, Sichuan University, Chengdu, China\\
Email: \href{mailto:hongyu.chern@gmail.com}{\nolinkurl{hongyu.chern@gmail.com}}

\medskip
Jingchen Hu\\
School of Mathematics, Sichuan University, Chengdu, China\\
Email: \href{mailto:jingchenhu@scu.edu.cn}{\nolinkurl{jingchenhu@scu.edu.cn}}

\medskip
Li Sheng\\
School of Mathematics, Sichuan University, Chengdu, China\\
Email: \href{mailto:lsheng@scu.edu.cn}{\nolinkurl{lsheng@scu.edu.cn}}
\end{flushleft}


\begin{thebibliography}{EST+18}
		\setlength{\itemsep}{2pt}

		\bibitem[B03]{r2}
		Z. B\l ocki,
		Interior regularity of the degenerate Monge--Amp\`ere equation,
		\emph{Bull. Aust. Math. Soc.} \textbf{68} (2003), no.~1, 81--92.

		\bibitem[BD11]{r3}
		Z. B\l ocki and S. Dinew,
		A local regularity of the complex Monge--Amp\`ere equation,
		\emph{Math. Ann.} \textbf{351} (2011), 411--416.

		\bibitem[CC95]{r1}
		L.~A. Caffarelli and X. Cabr\'e,
		\emph{Fully Nonlinear Elliptic Equations},
		American Mathematical Society Colloquium Publications, vol.~43,
		American Mathematical Society, Providence, RI, 1995.

		\bibitem[CNS86]{r25}
		L. Caffarelli, L. Nirenberg, and J. Spruck,
		The Dirichlet problem for the degenerate Monge--Amp\`ere equation,
		\emph{Rev. Mat. Iberoamericana} \textbf{2} (1986),
		no.~1--2, 19--27.

		\bibitem[C58]{r4}
		E. Calabi,
		Improper affine hyperspheres of convex type and a generalization
		of a theorem by K. J\"orgens,
		\emph{Michigan Math. J.} \textbf{5} (1958), 105--126.

		\bibitem[CY10]{ChangYuan2010}
		S.-Y.~A. Chang and Y. Yuan,
		A Liouville problem for the Sigma-2 equation,
		\emph{Discrete Contin. Dyn. Syst.} \textbf{28} (2010),
		no.~2, 659--664.

		\bibitem[C26]{r5}
		B.-L. Chen,
		Gradient estimates and a Liouville theorem of complex
		Monge--Amp\`ere equations,
		\emph{Sci. China Math.} (2026),
		published online 10 February 2026.

		\bibitem[CLM26]{ChenLiMa2026}
		C. Chen, J. Li, and X.-N. Ma,
		Brunn--Minkowski inequality for the first complex
		$\sigma_2$-Hessian eigenvalue,
		arXiv:2606.25678 (2026).

		\bibitem[CHS26]{r6}
		H. Chen, J. Hu, and L. Sheng,
		Power convexity of solutions to the complex Monge--Amp\`ere
		equation $\det(u_{i\bar j})=1$ in complex dimension two,
		arXiv:2607.06895 (2026).

		\bibitem[CZZ26]{r35}
		R. Chen, X. Zhou, and R. Zhu,
		Interior $C^{2,\alpha}$ regularity for the quadratic Hessian
		equation,
		arXiv:2608.29484v2 (2026).

		\bibitem[CX23]{r9}
		J. Cheng and Y. Xu,
		Interior $W^{2,p}$ estimate for small perturbations to the
		complex Monge--Amp\`ere equation,
		\emph{Calc. Var. Partial Differential Equations}
		\textbf{62} (2023), no.~8, Paper No.~231.

		\bibitem[CY75]{r8}
		S.-Y. Cheng and S.-T. Yau,
		Differential equations on Riemannian manifolds and their
		geometric applications,
		\emph{Comm. Pure Appl. Math.} \textbf{28} (1975), 333--354.

		\bibitem[CW01]{ChouWang2001}
		K.-S. Chou and X.-J. Wang,
		A variational theory of the Hessian equation,
		\emph{Comm. Pure Appl. Math.} \textbf{54} (2001), no.~9,
		1029--1064.

		\bibitem[CR21]{ConlonRochon2021}
		R.~J. Conlon and F. Rochon,
		New examples of complete Calabi--Yau metrics on $\mathbb C^n$
		for $n\ge3$,
		\emph{Ann. Sci. \'Ec. Norm. Sup\'er.} (4)
		\textbf{54} (2021), no.~2, 259--303.

		\bibitem[EST+18]{AIM2018}
		G. Edwards, G. Sz\'ekelyhidi, V. Tosatti, et al.,
		AimPL: Nonlinear PDEs in real and complex geometry,
		American Institute of Mathematics workshop problem list, 2018,
		\url{http://aimpl.org/nonlinpdegeom}.

		\bibitem[H12]{r13}
		W. He,
		On the regularity of the complex Monge--Amp\`ere equations,
		\emph{Proc. Amer. Math. Soc.} \textbf{140} (2012), no.~5,
		1719--1727.

		\bibitem[H24a]{r15}
		J. Hu,
		A metric lower bound estimate for geodesics in the space of
		K\"ahler potentials,
		\emph{J. Geom. Anal.} \textbf{34} (2024), Paper No.~225.

		\bibitem[H24b]{r16}
		J. Hu,
		A maximum rank theorem for solutions to the homogenous complex
		Monge--Amp\`ere equation in a $\C$-convex ring,
		\emph{Calc. Var. Partial Differential Equations}
		\textbf{63} (2024), Paper No.~166.

		\bibitem[H25]{r14}
		J. Hu,
		The preservation of convexity by geodesics in the space of
		K\"ahler potentials on complex affine manifolds,
		\emph{Math. Ann.} \textbf{393} (2025), no.~2, 1635--1681.

		\bibitem[HS26]{r17}
		J. Hu and L. Sheng,
		Convexity of the potential function of the Einstein-K\"ahler
		metric on a convex domain,
		arXiv:2603.10530v1 (2026).

		\bibitem[J48]{r18}
		F. John,
		Extremum problems with inequalities as subsidiary conditions,
		in \emph{Studies and Essays Presented to R. Courant on his
		60th Birthday},
		Interscience, New York, 1948, 187--204.

		\bibitem[J54]{r19}
		K. J\"orgens,
		\"Uber die L\"osungen der Differentialgleichung $rt-s^2=1$,
		\emph{Math. Ann.} \textbf{127} (1954), 130--134.

		\bibitem[L91]{LeBrun1991}
		C. LeBrun,
		Complete Ricci-flat K\"ahler metrics on $\mathbb C^n$ need not be flat,
		in \emph{Several Complex Variables and Complex Geometry, Part 2},
		Proc. Sympos. Pure Math., vol.~52,
		American Mathematical Society, Providence, RI, 1991, 297--304.

		\bibitem[LS21]{r20}
		A.-M. Li and L. Sheng,
		A Liouville theorem on the PDE $\det(f_{i\bar j})=1$,
		\emph{Math. Z.} \textbf{297} (2021), 1623--1632.

		\bibitem[LRW16]{LiRenWang2016}
		M. Li, C. Ren, and Z. Wang,
		An interior estimate for convex solutions and a rigidity theorem,
		\emph{J. Funct. Anal.} \textbf{270} (2016), no.~7, 2691--2714.

		\bibitem[L19]{YangLi2019}
		Y. Li,
		A new complete Calabi--Yau metric on $\mathbb C^3$,
		\emph{Invent. Math.} \textbf{217} (2019), no.~1, 1--34.

		\bibitem[LW26]{r33}
		Z. Li and K. Wu,
		Interior Hessian estimates for the quadratic Hessian equation,
		arXiv:2608.23233v2 (2026).

		\bibitem[MSY19]{r21}
		M. McGonagle, C. Song, and Y. Yuan,
		Hessian estimates for convex solutions to quadratic Hessian
		equation,
		\emph{Ann. Inst. H. Poincar\'e Anal. Non Lin\'eaire}
		\textbf{36} (2019), no.~2, 451--454.

		\bibitem[P72]{r22}
		A.~V. Pogorelov,
		On the improper convex affine hyperspheres,
		\emph{Geom. Dedicata} \textbf{1} (1972), 33--46.

		\bibitem[P78]{r23}
		A.~V. Pogorelov,
		\emph{The Minkowski Multidimensional Problem},
		Halsted Press [John Wiley \& Sons], New York, 1978.

		\bibitem[QY26]{r36}
		G. Qiu and J. Yan,
		Interior curvature estimates for the graphical scalar curvature
		equation in all dimensions,
		arXiv:2609.02581v1 (2026).

		\bibitem[RS84]{r24}
		D. Riebesehl and F. Schulz,
		A priori estimates and a Liouville theorem for complex
		Monge--Amp\`ere equations,
		\emph{Math. Z.} \textbf{186} (1984), 57--66.

		\bibitem[S07]{Savin2007}
		O. Savin,
		Small perturbation solutions for elliptic equations,
		\emph{Comm. Partial Differential Equations}
		\textbf{32} (2007), no.~4--6, 557--578.

		\bibitem[SY20]{r27}
		R. Shankar and Y. Yuan,
		Hessian estimate for semiconvex solutions to the sigma-2 equation,
		\emph{Calc. Var. Partial Differential Equations}
		\textbf{59} (2020), no.~1, Paper No.~30.

		\bibitem[SY22]{ShankarYuan2022}
		R. Shankar and Y. Yuan,
		Rigidity for general semiconvex entire solutions to the
		$\sigma_2$ equation,
		\emph{Duke Math. J.} \textbf{171} (2022), no.~15, 3201--3214.

		\bibitem[SY25]{r28}
		R. Shankar and Y. Yuan,
		Hessian estimates for the sigma-2 equation in dimension four,
		\emph{Ann. of Math.} (2) \textbf{201} (2025), no.~2, 489--513.

		\bibitem[S19]{Szekelyhidi2019}
		G. Sz\'ekelyhidi,
		Degenerations of $\mathbb C^n$ and Calabi--Yau metrics,
		\emph{Duke Math. J.} \textbf{168} (2019), no.~14, 2651--2700.

		\bibitem[T06]{Tian2006}
		G. Tian,
		Aspects of metric geometry of four manifolds,
		in \emph{Inspired by S.~S. Chern},
		Nankai Tracts Math., vol.~11,
		World Scientific Publishing, Hackensack, NJ, 2006, 381--397.

		\bibitem[W13]{r29}
		Y. Wang,
		A Liouville theorem for the complex Monge--Amp\`ere equation,
		preprint, arXiv:1303.2403 (2013).

		\bibitem[WY09]{r30}
		M. Warren and Y. Yuan,
		Hessian estimates for the sigma-2 equation in dimension 3,
		\emph{Comm. Pure Appl. Math.} \textbf{62} (2009), no.~3,
		305--321.

		\bibitem[Y23]{r31}
		Y. Yuan,
		A monotonicity approach to Pogorelov's Hessian estimates for
		Monge--Amp\`ere equation,
		\emph{Math. Eng.} \textbf{5} (2023), no.~2, 1--6.

		\bibitem[ZZ26]{r34}
		X. Zhou and R. Zhu,
		Interior $C^{2,\alpha}$ regularity for convex solutions of
		the 2-Hessian equation,
		arXiv:2608.24604v1 (2026).
	
	\end{thebibliography}
\end{document}